\documentclass
[
    a4paper,
    DIV=14,
    abstract=true,
    numbers=noenddot
]
{scrartcl}

\usepackage
{
    graphicx,
    amssymb,
    amsmath,
    amsthm,
    dsfont, 
    mathtools,
    authblk,
    enumitem,
    nicefrac,
}

\usepackage{quantikz}

\usepackage[utf8]{inputenc}

\usepackage[pdffitwindow=false,
            plainpages=false,
            pdfpagelabels=true,
            pdfpagemode=UseOutlines,
            pdfpagelayout=SinglePage,
            bookmarks=false,
            colorlinks=true,
            hyperfootnotes=false,
            linkcolor=blue,
            urlcolor=blue!30!black,
            citecolor=green!50!black]{hyperref}

\usepackage[bf,normal]{caption}

\graphicspath{{pics/}}

\newcommand{\subfiguretitle}[1]{{\scriptsize{#1}} \\}
\newcommand{\R}{\mathbb{R}}                                     
\newcommand{\C}{\mathbb{C}}                                     
\newcommand{\pd}[2]{\frac{\partial#1}{\partial#2}}              
\newcommand{\innerprod}[2]{\left\langle #1,\, #2 \right\rangle} 
\newcommand{\ts}{\hspace*{0.1em}}                               
\newcommand{\diff}{\,\mathrm{d}}                                
\providecommand{\abs}[1]{\left\lvert #1 \right\rvert}           
\providecommand{\norm}[1]{\left\lVert #1 \right\rVert}          
\providecommand{\vdot}{\boldsymbol\cdot}                        

\newcommand\restr[2]{{\left.\kern-\nulldelimiterspace #1\vphantom{\big|}\right|_{#2}}}

\newcommand\xqed[1]{\leavevmode\unskip\penalty9999 \hbox{}\nobreak\hfill \quad\hbox{#1}}
\newcommand{\exampleSymbol}{\xqed{$\triangle$}}

\DeclareMathOperator{\tr}{tr}
\DeclareMathOperator{\mspan}{span}
\let\div\relax
\DeclareMathOperator{\div}{div}

\newtheorem{theorem}{Theorem}[section]
\newtheorem{corollary}[theorem]{Corollary}
\newtheorem{lemma}[theorem]{Lemma}

\newtheorem{definition}[theorem]{Definition}
\theoremstyle{definition}
\newtheorem{example}[theorem]{Example}
\newtheorem{remark}[theorem]{Remark}

\makeatletter
\renewcommand*\env@matrix[1][*\c@MaxMatrixCols c]{%
  \hskip -\arraycolsep
  \let\@ifnextchar\new@ifnextchar
  \array{#1}}
\makeatother

\mathtoolsset{centercolon} 

\allowdisplaybreaks

\DeclareCaptionLabelFormat{period}{#1~#2}
\makeatletter
\def\blfootnote{\gdef\@thefnmark{}\@footnotetext}
\makeatother

\begin{document}

\title{Numerical approximation of the \\ Koopman--von Neumann equation: \\ Operator learning and quantum computing}
\author[1]{Stefan Klus}
\author[2]{Feliks Nüske}
\author[3]{Patrick Gelß}
\affil[1]{School of Mathematical \& Computer Sciences, Heriot--Watt University, Edinburgh, UK}
\affil[2]{Max-Planck-Institute for Dynamics of Complex Technical Systems, Magdeburg, Germany}
\affil[3]{Zuse Institute Berlin, Berlin, Germany}

\date{}

\maketitle

\begin{abstract}
The Koopman--von Neumann equation describes the evolution of wavefunctions associated with autonomous ordinary differential equations and can be regarded as a quantum physics-inspired formulation of classical mechanics. The main advantage compared to conventional transfer operators such as Koopman and Perron--Frobenius operators is that the Koopman--von Neumann operator is unitary even if the dynamics are non-Hamiltonian. Projecting this operator onto a finite-dimensional subspace allows us to represent it by a unitary matrix, which in turn can be expressed as a quantum circuit. We will exploit relationships between the Koopman--von Neumann framework and classical transfer operators in order to derive numerical methods to approximate the Koopman--von Neumann operator and its eigenvalues and eigenfunctions from data. Furthermore, we will show that the choice of basis functions and domain are crucial to ensure that the operator is well-defined. We will illustrate the results with the aid of guiding examples, including simple undamped and damped oscillators and the Lotka--Volterra model.
\end{abstract}

\section{Introduction}

In recent years, the field of quantum computing has advanced rapidly. Exploiting quantum mechanical effects such as superposition and entanglement, quantum computers are able to solve certain types of problems faster than their classical counterparts. More specifically, quantum speedups have been rigorously established for a limited number of structured problem classes, most notably integer factorization, discrete logarithms, and related order-finding tasks~\cite{Shor1997, Montanaro2016}, whereas unstructured search and several Monte Carlo-type estimation problems admit at most quadratic quantum improvements~\cite{Grover1996, Brassard2002}. There is currently no evidence that quantum computers can efficiently solve general NP-hard problems. Moreover, many other proposed quantum advantages rely on additional assumptions on data access, sparsity, conditioning, or output representation~\cite{Harrow2009, Dalzell2025}. A natural question thus is whether quantum computers can be used to speed up the simulation and analysis of classical nonlinear dynamical systems. One possible approach is to utilize the Koopman--von Neumann framework, a quantum physics-inspired formulation of classical mechanics \cite{Mauro02, BB14, Klein2018, Joseph20, LLASS22}. While Koopman and von Neumann originally considered only Hamiltonian systems---in this case the Koopman operator itself is already unitary---, the approach has been extended to arbitrary autonomous ordinary differential equations. Instead of analyzing the Koopman operator, which propagates observables, or its adjoint, the Perron--Frobenius operator, which propagates probability densities, it is possible to derive a unitary operator, the Koopman--von Neumann operator,\!\footnote{We will call the propagator for a fixed lag time \emph{Koopman--von Neumann operator} and the associated infinitesimal generator of the semi-group of operators \emph{Koopman--von Neumann generator} so that this is consistent with the notation used for the other operators and their generators.} that describes the evolution of complex-valued wavefunctions. The corresponding probability densities can then be obtained by applying Born's rule.

Projecting the Koopman--von Neumann operator onto a space spanned by a set of basis functions allows us to represent it by a unitary matrix, which in turn can be encoded by a quantum circuit. It is thus possible to simulate the action of the Koopman--von Neumann operator on wavefunctions using quantum computing. However, there are important differences between the behavior of the Koopman--von Neumann equation and the standard Schrödinger equation as pointed out in~\cite{Joseph20}. Most importantly, the Koopman--von Neumann generator is just a first-order differential operator unlike the Schrödinger operator. Furthermore, neither Planck's constant $ \hbar $ nor a phase is involved in this fully
classical theory \cite{LLASS22}. We will analyze the feasibility of Koopman--von Neumann mechanics for describing the evolution of deterministic dynamical systems---including non-Hamiltonian dynamics---and explore relationships with the closely related Koopman and Perron--Frobenius operators and their infinitesimal generators.

While conventional transfer operators such as the Perron--Frobenius operator and Koopman operator are in general well understood and have been used extensively (and successfully) in the past, see \cite{LaMa94, DJ99, Mezic05, BMM12, SS13, Mezic13, KKS16, Gia19, Mez23} to name just a few, the Koopman--von Neumann framework remains largely unexplored and almost no real-world applications can be found in the literature. Rigorous functional-analytic studies of the Koopman--von Neumann equation have emerged only recently. In particular, the well-posedness of the associated initial value problem for dynamical systems on bounded domains has been investigated in \cite{SGKP24}, clarifying the relationships with classical transport equations and transfer operator theory.
Further works study the Koopman--von Neumann framework from an operator-theoretic perspective and consider structural properties of the corresponding Liouville operators in specific dynamical settings \cite{McCaul2019, Grotto2020}. Another active line of research investigates the Koopman--von Neumann formalism in an algorithmic setting, addressing implementations on quantum hardware and in the form of circuit-based simulations \cite{Joseph20, Simon2024, Cochran2025, Novikau2025}. These results establish important theoretical foundations and provide a starting point for the development and analysis of numerical approximation schemes. Classical numerical simulation methods and data-driven techniques for the analysis of the Koopman--von Neumann equation have received little attention in the literature so far.

We are in particular interested in spectral properties since the eigenvalues and eigenfunctions of conventional transfer operators contain important information about dominant timescales or slowly evolving spatiotemporal modes, but also in the propagation of probability densities, observables, and wavefunctions. Although the Koopman operator has been known for a long time, its recent rise in popularity can be explained by the availability of efficient machine learning algorithms for estimating transfer operators from data and the abundance of training data. This allows us to gain insights into global properties of the underlying system without requiring detailed mathematical models. One of the most frequently used methods for approximating transfer operators is \emph{extended dynamic mode decomposition} and its various extensions (and predecessors, which have been developed independently of each other), see \cite{NoNu13, WKR15, WRK15, KKS16, MauGon16, KSM19, KNPNCS20}. Our goal is to show how variants of these methods can also be used to approximate the Koopman--von Neumann operator or its generator. The main contributions of this work are as follows:
\begin{enumerate}[leftmargin=4ex, topsep=0.5ex, itemsep=0ex]
\item We analyze relationships between infinitesimal generators of conventional transfer operators and the Koopman--von Neumann framework and derive Galerkin projections of these operators.
\item We approximate the Koopman--von Neumann generator using data-driven methods as well as finite element methods, which enables us not only to compute eigenvalues and eigenfunctions, but also to propagate wavefunctions and thus probability densities.
\item In order to illustrate the results and to highlight the advantages and limitations of the proposed methods, we consider simple guiding examples and benchmark problems, including undamped and damped oscillators as well as the Lotka--Volterra model.
\item We show that choosing suitable basis functions for the undamped oscillator allows us to construct a quantum circuit representation of the propagator of the projected Koopman--von Neumann generator.
\end{enumerate}
It is well known that Galerkin projections of transfer operators will in general violate conservation laws or cause numerical artifacts such as spurious oscillations, unless finite-dimensional invariant subspaces are known. We will show that at least for simple linear systems with conserved quantities, such invariant subspaces can be constructed for appropriately defined domains. It is then possible to analytically compute eigenvalues and eigenfunctions and to correctly predict the evolution of probability densities, observables, and wavefunctions contained in the invariant subspace. For more complicated systems, such subspaces can in general not be easily constructed. Choosing suitable projections that preserve the dynamical properties of interest and faithfully simulating such systems on quantum computers remains an open problem \cite{LLASS22}.

While we will focus specifically on deterministic dynamical systems given by autonomous ordinary differential equations, non-deterministic processes governed by stochastic differential equations would be of immense interest as well. Under certain conditions, the Koopman generator associated with a stochastic differential equation can be transformed into a Schrödinger operator and vice versa. This transformation was used in \cite{KNH20, KNP22} to apply data-driven methods developed for the approximation of the Koopman operator or generator to quantum systems. In the same way, we could represent a stochastic process by a Schrödinger equation and apply numerical methods developed for quantum systems. Analyzing relationships between these transformations and the Koopman--von Neumann framework, however, is beyond the scope of this work.

We will introduce the Perron--Frobenius, Koopman, and Koopman--von Neumann generators in Section~\ref{sec:transfer operators}. Furthermore, we will highlight similarities but also major differences between these operators. In Section~\ref{sec:approximation}, we will derive Galerkin projections and in particular data-driven approximations of the Koopman--von Neumann generator. Linear as well as nonlinear benchmark problems will be discussed in Section~\ref{sec:benchmark problems}. Numerical results---using either data-driven approximations or finite element methods---will be presented in Section~\ref{sec:numerical results}. We will conclude with open questions and future work in Section~\ref{sec:conclusion}.

\section{Transfer operators and Koopman--von Neumann mechanics}
\label{sec:transfer operators}

In this section, we will introduce the basic ideas behind the Koopman--von Neumann (KvN) framework, starting from the Koopman operator description of classical systems. The idea behind Koopman--von Neumann mechanics is to provide quantum representations of classical systems, such that the statistics of the classical dynamics can be recovered by applying the rules of quantum mechanics. In what follows, we will use the notation summarized in Table~\ref{tab:notation}.

\begin{table}
    \centering
    \caption{Operators and their eigenvalues and eigenfunctions.}
    \label{tab:notation}
    \begin{tabular}{l|p{1em}l|c|c}
         & \multicolumn{2}{c|}{differential operator} & eigenvalue & eigenfunction \\ \hline
        Koopman generator & $\mathcal{L} f $&$=\phantom{-}b \vdot \nabla f$ & $\lambda$ & $f_\lambda$ \\
        Perron--Frobenius generator & $\mathcal{L}^* \rho $&$= -b \vdot \nabla \rho - \div(b) \ts \rho $ & $\mu$ & $\rho_\mu$ \\
        Koopman--von Neumann generator & $\mathcal{Q} \ts \psi $&$= -b \vdot \nabla \psi - \frac{1}{2}\div(b) \ts \psi$ & $\nu$ & $\psi_\nu$
    \end{tabular}
\end{table}

\subsection{Koopman and Perron--Frobenius generators}
\label{subsec:koopman_pf_ops}

We begin by providing a high-level description of the Koopman operator framework for general autonomous ordinary differential equations.

\paragraph{Autonomous ordinary differential equations.}

Let $ \Omega \subseteq \R^d $ be the state space. Given an autonomous ordinary differential equation of the form
\begin{equation} \label{eq:ODE}
    \dot{x} = b(x),
\end{equation}
with $ b \colon \Omega \to \R^d $ sufficiently smooth, and an initial condition $ x(0) = x_0 $, the semigroup of Koopman operators $ \{\ts \mathcal{K}^t \ts\} $ is given by
\begin{equation*}
    \big(\mathcal{K}^t f\big)(x) = f\big(\Phi^t(x)\big),
\end{equation*}
where $ \Phi^t $ is the flow map defined by $ \Phi^t\big(x(0)\big) = x(t) $. The infinitesimal generator $ \mathcal{L} $ can then be written as
\begin{equation} \label{eq:Koopman generator}
    \mathcal{L} f = b \vdot \nabla f
                  = \sum_{i=1}^d b_i \ts \pd{f}{x_i}
\end{equation}
and its adjoint $ \mathcal{L}^* $---the generator of the semi-group of Perron--Frobenius operators---as
\begin{equation} \label{eq:Perron-Frobenius generator}
    \mathcal{L}^* \rho = -\div(b \ts \rho) = -\sum_{i=1}^d \pd{(b_i \ts \rho)}{x_i}.
\end{equation}
Using the product rule, we can also write $ \mathcal{L}^* \rho = -b \vdot \nabla \rho - \div(b) \ts \rho $. The operator $ \mathcal{L} $ describes the evolution of observables and $ \mathcal{L}^* $ the evolution of probability densities. For a detailed introduction, see \cite{LaMa94, KNPNCS20}.

\paragraph{Hamiltonian systems.}

As an important illustration, consider a Hamiltonian system defined by a classical Hamiltonian $ H \colon \R^d \times \R^d \to \R $, governed by Hamilton's equation of motion
\begin{equation*}
    \dot{q}_i = \pd{H}{p_i}
    \quad \text{and} \quad
    \dot{p}_i = -\pd{H}{q_i},
\end{equation*}
with $ i = 1, \dots, d $. The vector $ q \in \R^d $ contains the generalized coordinates and $ p \in \R^d $ the corresponding momenta. The Hamiltonian formalism is a convenient mathematical tool to describe conservative mechanical systems~\cite{MHO08}. Such systems---and thus also the associated transfer operators---have many special properties.

Defining $ x = \Big[\begin{smallmatrix} q \\[0.2em] p \end{smallmatrix}\Big] \in \R^{2 \ts d} $, we obtain $ b = \left[\begin{smallmatrix} \phantom{-} \nabla_p H \\ -\nabla_q H \end{smallmatrix}\right] $ and the Koopman generator can be written as
\begin{equation*}
    \mathcal{L} f =
    \begin{bmatrix}[r]
        \nabla_p H \\
        -\nabla_q H
    \end{bmatrix}
    \vdot
    \begin{bmatrix}
        \nabla_q f \\
        \nabla_p f
    \end{bmatrix}
    = \sum_{i=1}^d \left( \pd{H}{p_i} \pd{f}{q_i} - \pd{H}{q_i} \pd{f}{p_i} \right).
\end{equation*}
Similarly, for the Perron--Frobenius generator, we obtain
\begin{equation*}
    \mathcal{L}^* \rho = -\div\left(
    \begin{bmatrix}[r]
        \nabla_p H \\
        -\nabla_q H
    \end{bmatrix}
    \rho \right) = - \sum_{i=1}^d \left( \pd{H}{p_i} \pd{\rho}{q_i} - \pd{H}{q_i} \pd{\rho}{p_i} \right).
\end{equation*}
Note that the second-order derivatives $ \pd{^2 H}{p_i \partial q_i} $ cancel out. It holds that $ \mathcal{L} H = 0 $ and $ \mathcal{L} \exp(-H) = 0 $. Eigenfunctions of $ \mathcal{L} $ associated with the eigenvalue $ \lambda = 0 $ represent conservation laws, see \cite{KKB21} for more details. The crucial observation is that in this case $ \mathcal{L}^* = -\mathcal{L} $, i.e., $ \mathcal{L} $ is a skew-adjoint operator~\cite{LaMa94}. It then follows that $ \mathcal{H} = \mathrm{i} \ts \mathcal{L} $ is a Hermitian operator, which can be used to define a quantum system mirroring the dynamical properties of the classical dynamics. We will show in the next section how to generalize this idea beyond Hamiltonian systems.

\subsection{Koopman--von Neumann mechanics}
\label{subsec:kvn_mechanics}

Instead of considering the evolution of the probability density $ \rho $, Koopman--von Neumann mechanics describes the system's state by a wavefunction~$ \psi $, from which we can compute the probability density $ \rho $ using Born's rule, i.e., $ \rho = \psi^* \psi $. In order to make the introduction of the Koopman--von Neumann framework self-contained, we will include a short derivation. A detailed comparison of the different operators and their properties can be found in \cite{LLASS22}.

\paragraph{Autonomous ordinary differential equations.}

Although Koopman and von Neumann initially focused on Hamiltonian systems, it is possible to derive an evolution equation $ \dot{\psi} = \mathcal{Q} \ts \psi $ (the letter $ \mathcal{Q} $ stands for quantum here) for general ordinary differential equations of the form \eqref{eq:ODE}, where
\begin{equation} \label{eq:KvN generator}
    \mathcal{Q} \ts \psi
        = -b \vdot \nabla \psi - \tfrac{1}{2} \div(b) \ts \psi
        = -\sum_{i=1}^d \left( b_i \pd{\psi}{x_i} + \tfrac{1}{2} \pd{b_i}{x_i} \psi \right),
\end{equation}
see, e.g., \cite{Joseph20, LLASS22}. Then $ \rho := \psi^* \psi $ satisfies the Liouville equation $ \dot{\rho} = \mathcal{L}^* \rho $. This can be shown using the product rule:
\begin{align*}
    \dot{\rho} &= -\sum_{i=1}^d \left( b_i \pd{\psi^*}{x_i} + \tfrac{1}{2} \pd{b_i}{x_i} \psi^* \right) \psi - \psi^* \sum_{i=1}^d \left( b_i \pd{\psi}{x_i} + \tfrac{1}{2} \pd{b_i}{x_i} \psi \right) \\
        &= -\sum_{i=1}^d \left( b_i \left[\pd{\psi^*}{x_i} \psi + \psi^* \pd{\psi}{x_i}\right] + \pd{b_i}{x_i} \psi^* \psi \right) \\
        &= -\sum_{i=1}^d \left( b_i \pd{\rho}{x_i} + \pd{b_i}{x_i} \rho \right) \\
        &= -b \vdot \nabla \rho - \div(b) \ts \rho
         = \mathcal{L}^* \rho.
\end{align*}
Multiplying the Koopman--von Neumann equation by the imaginary unit $ \mathrm{i} $ results in
\begin{equation*}
    \mathrm{i} \ts \dot{\psi} = \mathrm{i} \ts \mathcal{Q} \ts \psi := \mathcal{H} \psi
\end{equation*}
so that the equation closely resembles the time-dependent Schrödinger equation, i.e., $ \mathcal{H} $ is a Hermitian operator and the corresponding propagator is unitary~\cite{LLASS22}.

\begin{remark} \label{rem:skew-symmetric part}
Defining $ \mathcal{L}_s = \tfrac{1}{2} (\mathcal{L} + \mathcal{L}^*) $ and $ \mathcal{L}_a = \tfrac{1}{2} (\mathcal{L} - \mathcal{L}^*) $ to be the symmetric and skew-symmetric parts of the operator $ \mathcal{L} $, we have $ \mathcal{L} = \mathcal{L}_s + \mathcal{L}_a $ and $ \mathcal{L}^* = \mathcal{L}_s - \mathcal{L}_a $. It then follows that
\begin{equation*}
    \mathcal{Q} \ts \psi = -\mathcal{L}_a \psi
        = \tfrac{1}{2}(\mathcal{L}^* - \mathcal{L}) \psi
        = -b \vdot \nabla \psi - \tfrac{1}{2} \div(b) \ts \psi,
\end{equation*}
i.e., the Koopman--von Neumann generator is the adjoint of the skew-symmetric part of the Koopman generator. This decomposition, along with a so-called \emph{warped-phase transformation}, which introduces an additional one-dimensional variable, was also used in the Schrödingerization approach proposed in \cite{JLY24} to solve arbitrary linear partial differential equations on quantum computers. We only require the skew-symmetric part in what follows. A complete Koopman-based quantization procedure, named QECD, for ergodic measure-preserving systems was introduced in \cite{GOPSS22}. As we will show in Appendix~\ref{app:QECD}, the Koopman--von Neumann approach described here can be seen as an extension of QECD to general autonomous ordinary differential equations.
\end{remark}

\paragraph{Hamiltonian systems.}

Since the divergence of a Hamiltonian vector field is zero, we immediately obtain
\begin{equation*}
    \mathcal{Q} = \mathcal{L}^* = -\mathcal{L}.
\end{equation*}
That is, the Koopman generator is in this case already skew-adjoint and the corresponding propagator automatically unitary.

\subsection{Properties of transfer operators}

The spectral properties of the Koopman operator are well-understood, see, e.g., \cite{BMM12}. We include short proofs of the corresponding properties of its infinitesimal generator for the sake of completeness. The question now is how eigenvalues and eigenfunctions of the operators introduced above are related and whether we can apply numerical methods developed for the Koopman generator also to the Koopman--von Neumann generator.

\begin{lemma} \label{lem:properties of operators}
Assuming that the functions constructed below are well-defined, the operators have the following properties:
\begin{enumerate}[leftmargin=4ex, itemsep=0ex, topsep=0.5ex, label=\roman*)]
\item Product rule for the Koopman generator: It holds that $ \mathcal{L}(f \ts g) = (\mathcal{L}f) \ts g + f (\mathcal{L}\ts g) $.
\item Products of eigenfunctions are eigenfunctions: Given two eigenfunctions $ f_{\lambda_1} $ and $ f_{\lambda_2} $ of $ \mathcal{L} $ associated with the eigenvalues $ \lambda_1 $ and $ \lambda_2 $, respectively, it holds that $ \mathcal{L}(f_{\lambda_1} \ts f_{\lambda_2}) = (\lambda_1 + \lambda_2) \ts f_{\lambda_1} \ts f_{\lambda_2} $.
\item Smooth transformations retain conservation laws: Assume that $ \mathcal{L} f_\lambda = \lambda \ts f_\lambda $ and let $ g \colon \R \to \R $ be differentiable, then $ \mathcal{L} (g \circ f_\lambda) = \lambda \ts f_\lambda \cdot g' \circ f_\lambda $. In particular, if $ \lambda = 0 $, then $ \mathcal{L} (g \circ f_\lambda) = 0 $.
\item Powers of eigenfunctions are eigenfunctions: For $ r \in \R $ we have $ f_\lambda^r $ is an eigenfunction associated with the eigenvalue $ r \ts \lambda $.
\item Conservation laws generate additional eigenfunctions of the Perron--Frobenius generator: Let $ \mathcal{L} f_0 = 0 $ and $ \mathcal{L}^* \rho_\mu = \mu \ts \rho_\mu $, then $ \mathcal{L}^*(f_0 \ts \rho_\mu) = \mu \ts (f_0 \ts \rho_\mu) $.
\item Generalized Born rule for the KvN generator: We have $ \mathcal{L}^* (f \ts g) = (\mathcal{Q} \ts f) \ts g + f \ts (\mathcal{Q} \ts g) $.
\item Born rule for eigenfunctions: Given two eigenfunctions $ \psi_{\nu_1} $ and $ \psi_{\nu_2} $ of $ \mathcal{Q} $ corresponding to the eigenvalues $ \nu_1 $ and $ \nu_2 $, we have $ \mathcal{L}^*(\psi_{\nu_1} \ts \psi_{\nu_2}) = (\nu_1 + \nu_2) \ts \psi_{\nu_1} \ts \psi_{\nu_2} $.
\item Eigenfunctions of systems with constant divergence: Assume $ \div(b) \equiv \beta $, where $ \beta \in \R $ is a constant. Given $ \mathcal{L} f_\lambda = \lambda \ts f_\lambda $, we obtain $ \mathcal{L}^* f_\lambda = -(\lambda + \beta) \ts f_\lambda $ and $ \mathcal{Q} f_\lambda = -(\lambda + \tfrac{1}{2} \ts \beta) \ts f_\lambda $.
\end{enumerate}
\end{lemma}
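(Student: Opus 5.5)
All eight properties follow from direct computation with the explicit first-order differential operators $\mathcal{L} f = b \vdot \nabla f$, $\mathcal{L}^* \rho = -b \vdot \nabla \rho - \div(b) \ts \rho$ and $\mathcal{Q} \ts \psi = -b \vdot \nabla \psi - \tfrac{1}{2}\div(b) \ts \psi$ from \eqref{eq:Koopman generator}, \eqref{eq:Perron-Frobenius generator} and \eqref{eq:KvN generator}. The plan is to first establish the three structural identities i), iii) and vi), and then obtain the remaining items as corollaries. The standing assumption that all functions are well-defined (and differentiable) is taken for granted. In particular, for iv) it is needed that $f_\lambda^r$ makes sense, e.g.\ $f_\lambda > 0$.

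For i), I apply the ordinary product rule componentwise, $\nabla(fg) = g \nabla f + f \nabla g$, and take the dot product with $b$. Item ii) follows by inserting $\mathcal{L} f_{\lambda_i} = \lambda_i f_{\lambda_i}$ into i). For iii), the chain rule gives $\nabla(g \circ f_\lambda) = (g' \circ f_\lambda) \nabla f_\lambda$, so $\mathcal{L}(g \circ f_\lambda) = (g'\circ f_\lambda)\ts \mathcal{L} f_\lambda = \lambda f_\lambda \cdot g' \circ f_\lambda$, and the case $\lambda = 0$ is immediate. Item iv) is iii) with $g(s) = s^r$: then $\lambda f_\lambda \cdot r f_\lambda^{r-1} = r\lambda f_\lambda^r$.

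For v), I write $\mathcal{L}^*(f_0 \rho_\mu) = -b \vdot \nabla(f_0\rho_\mu) - \div(b) f_0 \rho_\mu$. Expanding with the product rule gives $-\rho_\mu (b \vdot \nabla f_0) + f_0 \ts \mathcal{L}^*\rho_\mu$. The first term vanishes because $\mathcal{L} f_0 = 0$, and the second equals $\mu f_0 \rho_\mu$. For vi), I expand $(\mathcal{Q} f) g + f(\mathcal{Q} g) = -b \vdot (g \nabla f + f \nabla g) - \div(b) fg = -b\vdot \nabla(fg) - \div(b) fg = \mathcal{L}^*(fg)$. This is the same computation as the Born-rule derivation in Section~\ref{subsec:kvn_mechanics}, with $\psi^*$ and $\psi$ replaced by arbitrary $f$ and $g$; the two halves of the divergence term recombine. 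Item vii) then follows by substituting the eigenvalue relations into vi).

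Finally, for viii), with $\div(b) \equiv \beta$ I get $\mathcal{L}^* f_\lambda = -\mathcal{L} f_\lambda - \beta f_\lambda = -(\lambda+\beta) f_\lambda$. Similarly, $\mathcal{Q} f_\lambda = -\mathcal{L} f_\lambda - \tfrac12 \beta f_\lambda = -(\lambda + \tfrac12\beta) f_\lambda$. None of these steps is a genuine obstacle. The only point needing care is iv), where I must state a domain restriction (positivity of $f_\lambda$, or integer $r$) so that $f_\lambda^r$ and its derivative exist; this is covered by the lemma's well-definedness hypothesis.
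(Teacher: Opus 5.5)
Your proof is correct and follows essentially the same route as the paper's proof in Appendix~\ref{app:proof}. Both use the product rule for i) and vi) (with $\div(b)$ split into two halves), the chain rule for iii) with $g(s)=s^r$ giving iv), direct expansion for v) and viii), and obtain ii) and vii) as immediate corollaries. Your remark that iv) needs $f_\lambda^r$ to be well-defined and differentiable (e.g.\ $f_\lambda>0$ or integer $r$) is a sensible clarification that the paper leaves implicit in its well-definedness hypothesis.
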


The proofs of these properties can be found in Appendix \ref{app:proof}. Examples of systems with constant divergence are Hamiltonian systems, where $ \beta = 0 $, and linear differential equations of the form $ \dot{x} = B \ts x $, where $ \beta = \tr(B) $. Linear differential equations---including linear Hamiltonian systems---will be discussed in more detail below.

\subsection{Transfer operators on bounded domains}

So far, we only formally introduced the above operators, but did not specify the domain $ \Omega $. Koopman and Perron--Frobenius operators are often defined on $ L^2(\R^d) $, with the requirement that the observables or probability densities decay to zero for $ \norm{x} \to \infty $. In practice, however, we typically have to consider bounded domains $ \Omega $. If the domain is bounded, we assume $ \Omega $ to be forward-invariant under the flow in what follows, i.e., $ \Phi^t(\Omega) \subseteq \Omega $ for all $ t \ge 0 $, so that no boundary conditions are required for the Koopman generator~\cite{MM16}. A key question then is under which conditions the Perron--Frobenius generator retains the same analytical expression as in~\eqref{eq:Perron-Frobenius generator}. The following result shows that imposing homogeneous Dirichlet boundary conditions on probability densities is sufficient. This condition is natural given the forward-invariance of the dynamics.

\begin{lemma}
Given a bounded domain $ \Omega $, let $ \rho $ satisfy homogeneous Dirichlet boundary conditions, i.e., $ \rho(x) = 0 $ on $ \partial \Omega $, then $ \innerprod{\rho}{\mathcal{L} f} = \innerprod{\mathcal{L}^* \rho}{f} $, where $ \mathcal{L}^* \rho = -\div(b \ts \rho) $.
\end{lemma}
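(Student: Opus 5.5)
The plan is to use the divergence theorem, i.e.\ integration by parts on the bounded domain $ \Omega $. Assume $ \Omega $ has a Lipschitz (piecewise smooth) boundary with outward unit normal $ n $, and that $ b $, $ \rho $, and $ f $ are smooth enough (say $ C^1(\overline{\Omega}) $) for Gauss's theorem to apply. Throughout, work with the real-valued pairing $ \innerprod{\rho}{f} = \int_\Omega \rho \ts f \diff x $. For complex-valued functions the same computation goes through with $ \rho $ replaced by its conjugate, since $ b $ is real.

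The first step is to write $ \innerprod{\rho}{\mathcal{L} f} = \int_\Omega \rho \, (b \vdot \nabla f) \diff x $. Then apply the product rule
\begin{equation*}
    \div(\rho \ts f \ts b) = \rho \, b \vdot \nabla f + f \div(\rho \ts b).
\end{equation*}
This identity holds pointwise because $ \div(f \ts v) = \nabla f \vdot v + f \div(v) $ with $ v = \rho \ts b $. Integrating it over $ \Omega $ gives
\begin{equation*}
    \int_\Omega \rho \, b \vdot \nabla f \diff x = \int_{\partial\Omega} \rho \ts f \, (b \vdot n) \diff s - \int_\Omega f \div(b \ts \rho) \diff x .
\end{equation*}

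The second step is to remove the boundary integral. By the homogeneous Dirichlet condition, $ \rho = 0 $ on $ \partial\Omega $, so the integrand $ \rho \ts f \, (b \vdot n) $ vanishes identically. The only requirement is that $ f $ and $ b $ have bounded traces on $ \partial\Omega $; no condition on $ f $ itself is needed, which matches the statement that the Koopman generator needs no boundary conditions. What remains is
\begin{equation*}
    \innerprod{\rho}{\mathcal{L} f} = \int_\Omega \big(-\div(b \ts \rho)\big) f \diff x = \innerprod{\mathcal{L}^* \rho}{f}.
\end{equation*}

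I expect the main obstacle to be regularity rather than algebra. The divergence theorem requires a sufficiently regular boundary and a well-defined trace of $ \rho \ts f \ts b $ on $ \partial\Omega $. I would state these standing assumptions explicitly, in line with the paper's convention that the functions involved are well-defined. The argument could be extended to Sobolev functions ($ \rho \in H^1_0(\Omega) $, $ f \in H^1(\Omega) $, $ b \in C^1(\overline{\Omega}) $) by density of smooth compactly supported functions in $ H^1_0 $. Forward invariance of $ \Omega $ plays no role in this identity; it only motivates why the Dirichlet condition on densities is natural.
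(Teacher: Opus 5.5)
Your proposal is correct and follows the paper's proof essentially step for step: you apply the product rule $\div(b \ts \rho \ts f) = \rho \, b \vdot \nabla f + f \div(b \ts \rho)$, then the divergence theorem, and the boundary integral vanishes because $\rho = 0$ on $\partial\Omega$. Your explicit regularity assumptions and the remark about conjugating $\rho$ in the complex case are sensible additions that the paper leaves implicit.
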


\begin{proof}
We follow the proof detailed in \cite{LaMa94}, with the only difference that we consider a bounded domain $ \Omega $. It holds that
\begin{equation*}
    \innerprod{\rho}{\mathcal{L} f} = \int_\Omega \rho \ts \big[b \vdot \nabla f\big] \ts \mathrm{d}x = \int_\Omega \div(b \ts \rho \ts f) \ts \mathrm{d}x - \int_\Omega \div(b \ts \rho) \ts f \ts \mathrm{d}x = \innerprod{\mathcal{L}^* \rho}{f},
\end{equation*}
since, using the divergence theorem and the fact that $ \rho $ is by definition zero on the boundary,
\begin{equation*}
    \int_\Omega \div(b \ts \rho \ts f) \ts \mathrm{d}x = \int_{\partial \Omega} (b \ts \rho \ts f) \cdot \vec{n} \ts \mathrm{d}s = 0,
\end{equation*}
where $ \vec{n} $ is the outward-pointing normal vector.
\end{proof}

This result is important since it allows us to define the Koopman--von Neumann generator $ \mathcal{Q} $ in terms of the Koopman and Perron--Frobenius generators (see Remark~\ref{rem:skew-symmetric part}), now restricted to the space of functions satisfying homogeneous Dirichlet boundary conditions. The formal definition~\eqref{eq:KvN generator} is valid for any smooth function satisfying zero boundary conditions. A complete characterization of the domain of definition for the Koopman--von Neumann generator on bounded domains can be found in~\cite{SGKP24}, where Perron--Frobenius--Sobolev (PFS) spaces are introduced as the natural function space setting for the bounded-domain case. Rather than working only with classical Sobolev spaces, the analysis uses PFS spaces along with an appropriate vanishing-trace condition to incorporate the boundary behavior in a way that is consistent with the Perron--Frobenius framework, with the no-outflow condition providing the boundary-based analogue of forward invariance by ensuring that trajectories do not leave the domain under the forward flow.

\subsection{Linear dynamical systems and invariant subspaces}

In order to study the Koopman--von Neumann generator and its properties in more depth, we first consider linear dynamical systems since a wealth of their properties can be computed explicitly, see, e.g.,~\cite{BMM12, KKS16}.

\begin{lemma} \label{lem:linear ODE}
Given $ \dot{x} = B \ts x $, where $ B \in \R^{d \times d} $, let $ w \in \C^d $ be an eigenvector of $ B^\top $ corresponding to the eigenvalue $ \lambda \in \C $, i.e., $ B^\top w = \lambda \ts w $. Then $ f_\lambda(x) = x \vdot w $ is an eigenfunction of the Koopman generator.\!\footnote{We follow the physics convention and define the inner product to be conjugate-linear in the first variable.}
\end{lemma}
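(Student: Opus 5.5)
The plan is to compute $\mathcal{L} f_\lambda$ directly from the definition \eqref{eq:Koopman generator} with $b(x) = B\ts x$, and then use the eigenvector relation $B^\top w = \lambda \ts w$. The only subtlety is the convention in the footnote: $x \vdot w$ is conjugate-linear in the first argument. Since $x \in \R^d$, however, $x^* = x^\top$, so $f_\lambda(x) = \sum_{j=1}^d x_j \ts w_j$ is simply a linear function of $x$ with complex coefficients. I will first record this, so that the gradient is $\nabla f_\lambda = w$, constant in $x$.

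Next, I will insert this into the generator:
\begin{equation*}
    \mathcal{L} f_\lambda(x) = b(x) \vdot \nabla f_\lambda(x) = (B\ts x)^\top w = x^\top B^\top w = \lambda \ts x^\top w = \lambda \ts f_\lambda(x).
\end{equation*}
Here I again use that $B$ and $x$ are real, so no conjugation enters when the transposes are moved around. In index form, this is $\sum_i (\sum_j B_{ij} x_j) w_i = \sum_j x_j (B^\top w)_j$, which makes the step transparent.

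The only point needing care is the conjugation convention. If the inner product had conjugated $w$ instead, one would need $w$ to be an eigenvector of $B^*$ with eigenvalue $\bar\lambda$, or equivalently of $B^\top$ via complex conjugation. I will remark that, because $B$ is real, the eigenpairs of $B^\top$ come in conjugate pairs. Therefore either convention yields a Koopman eigenfunction, with eigenvalue $\lambda$ under the stated convention. Apart from this, the argument is a direct one-line computation, and no appeal to Lemma~\ref{lem:properties of operators} is needed. Those properties will only be used later, for example to generate further eigenfunctions as products $f_{\lambda_1} f_{\lambda_2}$.
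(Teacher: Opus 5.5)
Your proposal is correct and is the same argument as the paper's, which is the one-line computation $\mathcal{L} f_\lambda(x) = B x \vdot w = x \vdot B^\top w = \lambda f_\lambda(x)$. Your remark that the conjugation convention plays no role because $x$ is real makes explicit something the paper leaves unstated.
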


\begin{proof}
We have $ \mathcal{L} f_\lambda(x) = B \ts x \vdot w = x \vdot B^\top w = x \vdot (\lambda \ts w) = \lambda \ts f_\lambda(x) $.
\end{proof}

In what follows, we will call the eigenfunctions given by the eigenvectors of $ B^\top $ \emph{principal eigenfunctions}. Note that these functions are not bounded if we consider $ \Omega = \R^d $ and do in general not satisfy the boundary conditions if the domain $ \Omega $ is bounded. We can now construct infinitely many additional eigenfunctions by considering products of these eigenfunctions as shown in Lemma~\ref{lem:properties of operators}. As a special case, we will analyze Hamiltonian systems.

\begin{definition}
Let $ J \in \R^{2 \ts d \times 2 \ts d} $ be the matrix given by
\begin{equation*}
    J =
    \begin{bmatrix}
        0 & I\,\, \\
        -I & 0\,\,
    \end{bmatrix},
\end{equation*}
then $ B \in \R^{2 \ts d \times 2 \ts d} $ is called \emph{Hamiltonian} if $ J \ts B + B^\top J = 0 $. Note that $ J^{-1} = J^\top = -J $.
\end{definition}

Writing $ B $ as a block matrix
\begin{equation*}
    B =
    \begin{bmatrix}
        B_{11} & B_{12} \\
        B_{21} & B_{22}
    \end{bmatrix},
\end{equation*}
it is Hamiltonian if $ B_{11}^\top + B_{22} = 0 $ and $ B_{12} $ and $ B_{21} $ are symmetric \cite{MHO08}. It then follows that $ \dot{x} = B \ts x $ is a Hamiltonian system with
\begin{equation*}
    H(q, p) = - \tfrac{1}{2} \ts q^\top B_{21} \ts q + p^\top B_{11} q + \tfrac{1}{2} \ts p^\top B_{12} \ts p.
\end{equation*}
The spectrum of Hamiltonian matrices is symmetric about $ 0 $, i.e., $ \lambda \in \sigma(B) \!\!\implies\!\! -\lambda \in \sigma(B) $. If all eigenvalues of $ B $ lie on the imaginary axis, then solutions are purely oscillatory.

\begin{example} \label{ex:oscillator}
A \emph{damped oscillator} is given by $ \ddot{q} + \gamma \ts \dot{q} + \omega^2 \ts q = 0 $, where $ \gamma $ is the damping ratio and $ \omega $ the angular frequency. Defining $ x = \left[\begin{smallmatrix} q \\ p \end{smallmatrix}\right] $, this can be rewritten as
\begin{equation*}
    \dot{x} =
    \underbrace{\begin{bmatrix}
        0 & 1 \\
        -\omega^2 & -\gamma
    \end{bmatrix}}_{=: B}
    x.
\end{equation*}
We obtain the \emph{undamped oscillator} by setting $ \gamma = 0 $. The Hamiltonian of the system is in this case given by $ H(q, p) = \frac{1}{2} \ts \omega^2 \ts q^2 + \frac{1}{2} \ts p^2 $. \exampleSymbol
\end{example}

We will now construct invariant subspaces for linear systems, which enable us to define suitable dictionaries for the numerical approximation techniques derived in Section~\ref{sec:approximation}. The first result shows that finite-dimensional subspaces spanned by monomials up to a fixed order $ r $ are invariant under the action of the Koopman generator.

\begin{lemma} \label{lem:invariant subspaces 1}
Let $ \alpha = (\alpha_1, \dots, \alpha_d) \in \mathbb{N}_0^d $ be a multi-index with $ \abs{\alpha} = \sum_{i=1}^d \alpha_i $ and define
\begin{equation*}
    \mathbb{V} = \mspan\big\{ x^\alpha = x_1^{\alpha_1} \cdots x_d^{\alpha_d} : \abs{\alpha} \le r\big\}
\end{equation*}
to be the finite-dimensional subspace spanned by monomials of order up to $ r $. Given a dynamical system of the form $ \dot{x} = B \ts x $ and a function $ f \in \mathbb{V} $, it holds that $ \mathcal{L} f \in \mathbb{V} $.
\end{lemma}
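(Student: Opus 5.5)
By linearity of $ \mathcal{L} $ and the fact that $ \mathbb{V} $ is a vector space, it is enough to check the claim on the spanning monomials $ x^\alpha $ with $ \abs{\alpha} \le r $. The plan is to compute $ \mathcal{L} x^\alpha $ directly from the formula \eqref{eq:Koopman generator} and show that every term it produces is again a monomial of total degree $ \abs{\alpha} $. In fact this gives the stronger statement that each homogeneous component $ \mathbb{V}_k = \mspan\{x^\alpha : \abs{\alpha} = k\} $ is invariant, and $ \mathbb{V} = \bigoplus_{k=0}^r \mathbb{V}_k $.

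For $ b(x) = B\ts x $ we have $ b_i(x) = \sum_{j=1}^d B_{ij} \ts x_j $. The partial derivatives of a monomial are
\begin{equation*}
    \pd{x^\alpha}{x_i} = \alpha_i \ts x^{\alpha - e_i}
\end{equation*}
when $ \alpha_i \ge 1 $, and $ 0 $ otherwise. Substituting into the generator gives
\begin{equation*}
    \mathcal{L} x^\alpha = \sum_{i=1}^d \sum_{j=1}^d B_{ij} \ts \alpha_i \ts x^{\alpha - e_i + e_j}.
\end{equation*}
Each exponent $ \alpha - e_i + e_j $ lies in $ \mathbb{N}_0^d $, because it only occurs when $ \alpha_i \ge 1 $. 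Its total order is $ \abs{\alpha} - 1 + 1 = \abs{\alpha} \le r $, so every summand lies in $ \mathbb{V} $, and hence $ \mathcal{L} x^\alpha \in \mathbb{V} $.

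An alternative route is through Lemma~\ref{lem:properties of operators}~i). Since $ \mathcal{L} x_i = (B\ts x)_i $ is linear, induction on $ \abs{\alpha} $ using $ \mathcal{L}(x_i \ts x^\beta) = (\mathcal{L} x_i) \ts x^\beta + x_i \ts \mathcal{L} x^\beta $ shows that $ \mathcal{L} $ preserves homogeneous degree. I would present the direct computation and mention the inductive argument only briefly.

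None of the steps is a real obstacle. The one point that needs care is the bookkeeping at the boundary case $ \alpha_i = 0 $, where the derivative vanishes, so no negative exponents appear. This is also where linearity of $ b $ is essential: it is what ensures the degree is not raised. For a general polynomial $ b $ of degree $ m $, the same computation would raise the degree by $ m - 1 $, and the subspace would no longer be invariant.
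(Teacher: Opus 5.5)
Your proposal is correct and follows the paper's proof essentially step for step: reduce to monomials by linearity, differentiate, substitute $b_i(x) = \sum_j B_{ij} x_j$, and observe that each resulting term $x^{\alpha - e_i + e_j}$ has total degree $\abs{\alpha} \le r$. Your explicit exponent bookkeeping also gives the slightly sharper fact, implicit in the paper's argument, that each homogeneous component $\mathbb{V}_k$ is itself invariant.
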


\begin{proof}
Let $ e_i \in \mathbb{N}_0^d $ denote the $ i $th unit vector. First, assume that $ f(x) = x^\alpha $, with $ \abs{\alpha} = q \le r $, then
\begin{equation*}
    \pd{f}{x_i} =
    \begin{cases}
        \alpha_i \ts x^{\alpha - e_i}, & \alpha_i \ne 0, \\
        0, & \text{otherwise},
    \end{cases}
\end{equation*}
is either a monomial of order $ q-1 $ or identical to zero. Thus,
\begin{equation*}
    \mathcal{L} f = B \ts x \vdot \nabla f = \sum_{i=1}^d \sum_{j=1}^d B_{ij} \pd{f}{x_i} \ts x_j
\end{equation*}
is at most a polynomial of degree $ q \le r $ and thus contained in $ \mathbb{V} $. For an arbitrary function $ f \in \mathbb{V} $, we apply the same logic to each monomial term.
\end{proof}

We can now use this results to construct basis functions that automatically satisfy the boundary conditions if the domain is bounded.

\begin{lemma} \label{lem:invariant subspaces 2}
Assume that there exists a conservation law $ f_0 $, i.e., $ \mathcal{L} f_0 = 0 $, that vanishes on the boundary, that is, $ f_0(x) = 0 $ for all $ x \in \partial \Omega $. We define a new function space
\begin{equation*}
    \mathbb{V}' = \mspan\big\{ f_0 \ts f : f \in \mathbb{V} \}
\end{equation*}
so that all functions satisfy the boundary conditions. Given $ f' \in \mathbb{V}' $, it holds that $ \mathcal{L} f' \in \mathbb{V}' $.
\end{lemma}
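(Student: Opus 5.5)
The proof combines the product rule for the Koopman generator, Lemma~\ref{lem:properties of operators}~i), with the invariance of $\mathbb{V}$ from Lemma~\ref{lem:invariant subspaces 1}. Since $\mathcal{L}$ is linear and $\mathbb{V}'$ is defined as a span, it suffices to treat a single generator $f' = f_0 \ts f$ with $f \in \mathbb{V}$; a general element is a finite linear combination of such generators and is handled term by term.

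For $f' = f_0 \ts f$, the product rule gives
\begin{equation*}
    \mathcal{L}(f_0 \ts f) = (\mathcal{L} f_0) \ts f + f_0 \ts (\mathcal{L} f) = f_0 \ts (\mathcal{L} f),
\end{equation*}
because $\mathcal{L} f_0 = 0$ by assumption. Lemma~\ref{lem:invariant subspaces 1} gives $\mathcal{L} f \in \mathbb{V}$, so $f_0 \ts (\mathcal{L} f)$ is of the form $f_0 \ts g$ with $g \in \mathbb{V}$. Hence it lies in $\mathbb{V}'$. Linearity then extends the conclusion to all of $\mathbb{V}'$. In fact, since the set $\{f_0 \ts f : f \in \mathbb{V}\}$ is already a linear space, the span is not needed.

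The boundary claim needs no dynamical argument: every element of $\mathbb{V}'$ is $f_0$ times a polynomial, so it vanishes wherever $f_0$ vanishes, in particular on $\partial \Omega$. The same holds for $\mathcal{L} f' = f_0 \ts (\mathcal{L} f)$, so the boundary conditions are preserved under $\mathcal{L}$.

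The only point needing care is the implicit regularity: $f_0$ must be differentiable so that the product rule applies. This is covered by the standing assumption in Lemma~\ref{lem:properties of operators} that all functions involved are well-defined, so I expect no real obstacle. The argument is essentially one line once the product rule is invoked.
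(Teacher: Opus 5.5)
Your proof is correct and matches the paper's argument: apply the product rule from Lemma~\ref{lem:properties of operators}~i), use $\mathcal{L} f_0 = 0$, and invoke Lemma~\ref{lem:invariant subspaces 1} to conclude that $f_0 \ts (\mathcal{L} f) \in \mathbb{V}'$. Your extra remarks, that the span is superfluous and that the boundary condition is preserved, are also correct but not needed for the statement.
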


\begin{proof}
Using Lemma~\ref{lem:properties of operators} and Lemma~\ref{lem:invariant subspaces 1}, we have
\begin{equation*}
    \mathcal{L}(f_0 \ts f) = \underbrace{(\mathcal{L}f_0)}_{=0} f + f_0 \underbrace{(\mathcal{L}\ts f)}_{\in \mathbb{V}} \in \mathbb{V}'.
\end{equation*}
\end{proof}

The same result also applies to the Perron--Frobenius and Koopman--von Neumann generators as the following corollary shows.

\begin{corollary}
The function space $ \mathbb{V}' $ is also invariant under the action of the Perron--Frobenius generator $ \mathcal{L}^* $ and Koopman--von Neumann generator $ \mathcal{Q} $.
\end{corollary}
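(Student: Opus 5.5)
The setting is that of Lemma~\ref{lem:invariant subspaces 2}: a linear system $\dot{x} = B\ts x$ together with a conservation law $f_0$ that vanishes on $\partial\Omega$. The plan is to exploit that the divergence is constant. Since $b(x) = B\ts x$, we have $\div(b) = \tr(B) =: \beta$, which is a real constant. This puts us in the setting of property viii) of Lemma~\ref{lem:properties of operators}. However, that property is phrased for eigenfunctions, so I will instead use the underlying identities directly, which hold for arbitrary functions. For any differentiable $g$,
\begin{equation*}
    \mathcal{L}^* g = -b \vdot \nabla g - \beta\ts g = -\mathcal{L} g - \beta \ts g,
    \qquad
    \mathcal{Q} \ts g = -b \vdot \nabla g - \tfrac{1}{2}\beta \ts g = -\mathcal{L} g - \tfrac{1}{2}\beta\ts g .
\end{equation*}
These follow immediately from the expressions in Table~\ref{tab:notation} and equation~\eqref{eq:KvN generator}.

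The key steps are then as follows. First, I take an arbitrary $f' = f_0\ts f \in \mathbb{V}'$ with $f\in\mathbb{V}$; by linearity of all operators involved, it suffices to treat such generators of $\mathbb{V}'$. Second, I apply Lemma~\ref{lem:invariant subspaces 2} to conclude that $\mathcal{L} f' \in \mathbb{V}'$. Trivially, $\beta\ts f'$ and $\tfrac{1}{2}\beta\ts f'$ also lie in $\mathbb{V}'$, because $\mathbb{V}'$ is a linear space. Third, I combine these using the identities above:
\begin{equation*}
    \mathcal{L}^* f' = -\mathcal{L} f' - \beta f' \in \mathbb{V}'
    \qquad\text{and}\qquad
    \mathcal{Q}\ts f' = -\mathcal{L} f' - \tfrac{1}{2}\beta f' \in \mathbb{V}'.
\end{equation*}

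As an alternative check, one can use property v) of Lemma~\ref{lem:properties of operators}, written for general $\rho$, which gives $\mathcal{L}^*(f_0\ts f) = f_0\ts\mathcal{L}^* f$. From this one would show $\mathcal{L}^* f \in \mathbb{V}$, since $-\mathcal{L} f - \beta f$ stays in $\mathbb{V}$ by Lemma~\ref{lem:invariant subspaces 1}. An analogous product-type identity holds for $\mathcal{Q}$, namely $\mathcal{Q}(f_0 g) = f_0\ts\mathcal{Q} g - g\ts\mathcal{L} f_0 = f_0 \ts \mathcal{Q} g$. Either route closes the argument.

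There is no real obstacle here. The only point needing care is to justify that the constant-divergence relations hold for all functions and not merely for eigenfunctions, so that property viii) can be used beyond its literal statement. Writing out $\mathcal{L}^*$ and $\mathcal{Q}$ explicitly with $\div(b)=\tr(B)$, as above, settles this. One should also note that $\mathbb{V}'$ consists of functions vanishing on $\partial\Omega$, so the operators act consistently with the Dirichlet boundary setting of the preceding lemma.
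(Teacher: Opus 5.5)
Your proof is correct and follows the paper's argument: for $\psi \in \mathbb{V}'$ the paper likewise writes $\mathcal{Q}\psi = -\mathcal{L}\psi - \tfrac{1}{2}\tr(B)\,\psi$, uses Lemma~\ref{lem:invariant subspaces 2} for the first term and the fact that $\mathbb{V}'$ is a linear space for the second, and handles $\mathcal{L}^*$ in the same way. Your alternative check via the product identity $\mathcal{Q}(f_0 g) = f_0\,\mathcal{Q} g - g\,\mathcal{L} f_0 = f_0\,\mathcal{Q} g$ is also valid but not needed.
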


\begin{proof}
Let $ \psi \in \mathbb{V}' $, then
\begin{equation*}
    \mathcal{Q} \psi = -\underbrace{b \vdot \nabla \psi}_{= \mathcal{L} \psi \in \mathbb{V}'} \;- \; \tfrac{1}{2} \underbrace{\div(b) \ts \psi}_{\mathclap{=\tr(B) \psi \in \mathbb{V}'}} \in \mathbb{V}'.
\end{equation*}
The proof for $ \mathcal{L}^* $ follows in the same way.
\end{proof}

Given a conservation law $ f_0 $, this allows us to define a suitable domain by choosing the boundary $ \partial \Omega $ to be a level set of the function $ f_0 $, provided it generates a valid bounded domain. Invariant subspaces for all of the operators $ \mathcal{L} $, $\mathcal{L}^*$, and $ \mathcal{Q} $ can then be constructed as we will illustrate in Section~\ref{sec:benchmark problems}.

\section{Approximation of the Koopman--von Neumann generator}
\label{sec:approximation}

Numerical methods for the approximation of the Koopman operator and its infinitesimal generator gained a lot of attention in recent years, see, e.g., \cite{WKR15, KNPNCS20}. We can now exploit the relationships between the operators derived in Section~\ref{sec:transfer operators} to estimate the Koopman--von Neumann generator.

\subsection{Galerkin projection}

Let $ \{ \phi_i \}_{i=1}^n $ be a set of linearly independent basis functions that, if the domain is bounded, satisfy the homogeneous Dirichlet boundary conditions. Define $ \mathbb{V} = \mspan\{ \phi_i \}_{i=1}^n $ to be the generated $ n $-dimensional subspace and
\begin{equation*}
    \phi(x) = [\phi_1(x), \dots, \phi_n(x)]^\top \in \R^n.
\end{equation*}
We can now write any function $ f \in \mathbb{V} $ as a linear combination of the basis functions, i.e.,
\begin{equation*}
    f(x) = \sum_{i=1}^n c_i \ts \phi_i(x) = c^\top \phi(x),
\end{equation*}
where $ c = [c_1, \dots, c_n]^\top \in \R^n $. Let $ \mathcal{P} $ denote the projection onto $ \mathbb{V} $, then the matrix representation $ L \in \R^{n \times n} $ of the Galerkin approximation $ \restr{\mathcal{L}}{\mathbb{V}} := \mathcal{P} \ts \mathcal{L} \ts \mathcal{P} $ of the Koopman generator $ \mathcal{L} $ is given by
\begin{equation*}
    L = G^{-1} A,
\end{equation*}
where
\begin{equation*}
    G_{ij} = \innerprod{\phi_i}{\phi_j}
    \quad \text{and} \quad
    A_{ij} = \innerprod{\phi_i}{\mathcal{L} \phi_j}.
\end{equation*}
We can also write this in compact form as
\begin{equation*}
    G = \int \! \phi(x) \ts \phi(x)^\top \ts \mathrm{d}x
    \quad \text{and} \quad
    A = \int \! \phi(x) \ts \mathcal{L}\phi(x)^\top \ts \mathrm{d}x,
\end{equation*}
where $ \mathcal{L} $ is applied component-wise. For a function $ f(x) = c^\top \phi(x) $, we then obtain
\begin{equation*}
    \restr{\mathcal{L}}{\mathbb{V}} \ts f(x) = (L \ts c)^\top \phi(x).
\end{equation*}
Since $ \innerprod{\phi_i}{\mathcal{L}^* \phi_j} = \innerprod{\mathcal{L} \phi_i}{\phi_j} = A_{ji} $, the matrix representation $ L^* $ of the projected Perron--Frobenius generator  $ \restr{\mathcal{L}^*}{\mathbb{V}} $ is given by
\begin{equation*}
    L^* = G^{-1} A^\top.
\end{equation*}
Similarly, in order to approximate the projected Koopman--von Neumann generator $ \restr{\mathcal{Q}}{\mathbb{V}} $, we can compute $ \innerprod{\phi_i}{\mathcal{Q} \ts \phi_j} = \frac{1}{2} \big(\!\innerprod{\phi_i}{\mathcal{L}^* \phi_j} - \innerprod{\phi_i}{\mathcal{L} \phi_j}\!\big) = \frac{1}{2} (A_{ji} - A_{ij}) $ so that its matrix representation is given by
\begin{equation*}
    Q = \tfrac{1}{2} \ts G^{-1} \big(A^\top - A\big).
\end{equation*}
If $ G $ is the identity matrix, then $ Q $ is clearly skew-symmetric. Otherwise, the skew-symmetric property can be enforced by using a whitening transformation. Since the basis functions are by assumption linearly independent, the matrix $ G $ is symmetric and positive definite so that its eigenvalues are positive. Let $ G = V \ts D \ts V^\top $ be the eigendecomposition of $ G $ and define $ \widetilde{\phi}(x) = D^{-\nicefrac{1}{2}} V^\top \ts \phi(x) $, then
\begin{equation*}
    \widetilde{G} = \int \! \widetilde{\phi}(x) \ts \widetilde{\phi}(x)^\top \ts \mathrm{d}x = \int \! D^{-\nicefrac{1}{2}} V^\top \ts \phi(x) \ts \phi(x)^\top V D^{-\nicefrac{1}{2}} \ts \mathrm{d}x = D^{-\nicefrac{1}{2}} V^\top \ts G \ts V D^{-\nicefrac{1}{2}} = I
\end{equation*}
and
\begin{equation*}
    \widetilde{A} = \int \! \widetilde{\phi}(x) \ts \mathcal{L}\widetilde{\phi}(x)^\top \ts \mathrm{d}x = \int \! D^{-\nicefrac{1}{2}} V^\top \ts \phi(x) \ts \mathcal{L}\phi(x)^\top V D^{-\nicefrac{1}{2}} \ts \mathrm{d}x = D^{-\nicefrac{1}{2}} V^\top \ts A \ts V D^{-\nicefrac{1}{2}},
\end{equation*}
i.e., the transformed basis functions $ \widetilde{\phi}_i(x) $ are orthonormal and $ \widetilde{Q} = \tfrac{1}{2} \ts \widetilde{G}^{-1} \big(\widetilde{A}^\top - \widetilde{A}\big) = \tfrac{1}{2} \big(\widetilde{A}^\top - \widetilde{A}\big) $ is skew-symmetric. This shows that it is always possible to define the basis functions in such a way that the matrix representation of the projected Koopman--von Neumann generator is skew-symmetric, and the corresponding propagator $ e^{t\widetilde{Q}} $ is unitary.

In order to compute eigenvalues and eigenfunctions of the projected Koopman--von Neumann generator $ \restr{\mathcal{Q}}{\mathbb{V}} $, we have to compute eigenvalues and eigenvectors of $ Q $. Given $ Q \ts v = \nu \ts v $, this implies that $ \psi_\nu(x) = v^\top \phi(x) $ is an eigenfunction since
\begin{equation*}
    \restr{\mathcal{Q}}{\mathbb{V}} \ts \psi_\nu(x) = (Q \ts v)^\top \phi(x) = \nu \ts v^\top \phi(x) = \nu \ts \psi_\nu(x).
\end{equation*}
Additionally, the temporal evolution of the coefficients $ c $ of a function $ \psi(x) = c^\top \phi(x) $ can now be described by the system of linear ordinary differential equations $ \dot{c} = Q \ts c $. This allows us to propagate projected wavefunctions in time.

\subsection{Data-driven approximation}

A data-driven approach to approximate the Koopman generator called \emph{generator extended dynamic mode decomposition} (gEDMD), which can be viewed as a Galerkin approximation, where the matrices $ A $ and $ G $ are estimated using Monte Carlo integration, was proposed in \cite{KNPNCS20}. Our goal now is to extend this method to the Koopman--von Neumann generator. Given uniformly sampled training data $ x^{(l)} $, with $ l= 1, \dots, m $, and the corresponding time derivatives $ \dot{x}^{(l)} $, which can, for instance, be estimated using finite difference approximations, we define
\begin{equation*}
    \dot{\phi}_k(x) = (\mathcal{L} \phi_k)(x) = \sum_{i=1}^d b_i(x) \ts \pd{\phi_k}{x_i}(x)
\end{equation*}
and compute the matrices
\begin{equation*}
    \Phi_X =
    \begin{bmatrix}
        \phi_1(x_1) & \dots  & \phi_1(x_m) \\
        \vdots      & \ddots & \vdots      \\
        \phi_n(x_1) & \dots  & \phi_n(x_m)
    \end{bmatrix}
    \quad \text{and} \quad
    \dot{\Phi}_X =
    \begin{bmatrix}
        \dot{\phi}_1(x_1) & \dots  & \dot{\phi}_1(x_m) \\
        \vdots            & \ddots & \vdots            \\
        \dot{\phi}_n(x_1) & \dots  & \dot{\phi}_n(x_m)
    \end{bmatrix}.
\end{equation*}
The required derivatives of the basis functions can, for example, be computed using automatic differentiation. We then have
\begin{alignat*}{4}
    \widehat{G} &:= \tfrac{1}{m} \Phi_X \ts \Phi_X^\top = \tfrac{1}{m} \sum_{l=1}^m \phi(x^{(l)}) \ts \phi(x^{(l)})^\top &&\underset{\scriptscriptstyle m \rightarrow \infty}{\longrightarrow} \int \phi(x) \ts \phi(x)^\top \ts \mathrm{d}x &&= G, \\
    \widehat{A} &:= \tfrac{1}{m} \Phi_X \ts \dot{\Phi}_X^\top = \tfrac{1}{m} \sum_{l=1}^m \phi(x^{(l)}) \ts \dot{\phi}(x^{(l)})^\top &&\underset{\scriptscriptstyle m \rightarrow \infty}{\longrightarrow} \int \phi(x) \ts \mathcal{L} \phi(x)^\top \ts \mathrm{d}x &&= A,
\end{alignat*}
so that $ \widehat{L} = \widehat{G}^+ \widehat{A} $ is a consistent approximation of the projected Koopman generator, where $ ^+ $ denotes the pseudoinverse, see \cite{KNPNCS20}. Similarly, we can compute empirical estimates of the projected Perron--Frobenius generator and Koopman--von Neumann generator using
\begin{equation*}
    \widehat{L}^* = \widehat{G}^+ \widehat{A}^{\ts\top}
    \quad \text{and} \quad
    \widehat{Q} = \tfrac{1}{2} \ts \widehat{G}^+ \big(\widehat{A}^{\ts\top} - \widehat{A}\big),
\end{equation*}
respectively. Using whitened basis functions as described above, we then obtain a skew-symmetric matrix representation of the Koopman--von Neumann generator computed from data.

\begin{remark} We would like to point out that:
\begin{enumerate}[leftmargin=4ex, itemsep=0ex, topsep=0.5ex, label=\roman*)]
\item In the derivation above, we implicitly assumed that the data points $ x^{(l)} $ are uniformly distributed. If we estimate the operators from trajectory data, this is in general not the case and we compute a Galerkin projection w.r.t.\ a weighted inner product, see \cite{WKR15, KKS16, KM18, BF23} for a more detailed analysis.
\item A data-driven method for estimating the Koopman generator that does not require time-derivatives is described in \cite{MauGon16}. The approximation of the generator is in this case obtained by taking the logarithm of a matrix representation of the Koopman operator estimated from time series data. The method could hence also be used to approximate the Koopman--von Neumann generator.
\end{enumerate}
\end{remark}

\section{Benchmark problems}
\label{sec:benchmark problems}

In this section, we will introduce and analyze benchmark problems, namely simple undamped and damped oscillators and the Lotka--Volterra model. We will in particular illustrate some of the similarities and structural differences between the Koopman and Koopman--von Neumann generators that occur for different systems and boundary conditions. Numerical results for these systems will be presented in Section~\ref{sec:numerical results}.

\subsection{Undamped oscillator}

We first consider the undamped oscillator defined in Example~\ref{ex:oscillator} and choose $ \omega = \sqrt{2} $ so that
\begin{equation*}
    B =
    \begin{bmatrix}
        0 & 1 \\
        -2 & 0
    \end{bmatrix}.
\end{equation*}
Using Lemma~\ref{lem:linear ODE}, the principal eigenvalues and eigenfunctions of the Koopman generator are
\begin{alignat*}{2}
    \lambda_1 &= \phantom{+} \mathrm{i} \ts \sqrt{2}, &\qquad  f_{\lambda_1}(x) &= \phantom{+} \mathrm{i} \ts x_1 + \tfrac{1}{\sqrt{2}} x_2, \\
    \lambda_2 &= -\mathrm{i} \ts \sqrt{2}, & f_{\lambda_2}(x) &= -\mathrm{i} \ts x_1 + \tfrac{1}{\sqrt{2}} x_2,
\end{alignat*}
with period $ T = \frac{2 \ts \pi}{\sqrt{2}} \approx 4.44 $. Additionally, any function of the form $ f_\lambda(x) = f_{\lambda_1}(x)^{k_1} f_{\lambda_2}(x)^{k_2} $ is an eigenfunction corresponding to the eigenvalue $ \lambda = k_1 \ts \lambda_1 + k_2 \ts \lambda_2 = \mathrm{i} \ts \sqrt{2}(k_1 - k_2) $. All these eigenvalues lie on the imaginary axis. Note that if $ k_1 = k_2 $, then $ \lambda = 0 $. In particular, for $ k_1 = k_2 = 1 $, we obtain $ f_\lambda(x) = f_{\lambda_1}(x) \ts f_{\lambda_2}(x) = x_1^2 + \frac{1}{2} x_2^2 = H(q, p) =: H(x) $, i.e., the Hamiltonian itself. We now show how to construct eigenfunctions satisfying boundary conditions in different settings:
\begin{enumerate}[leftmargin=4ex, itemsep=0ex, topsep=0.5ex, label=\roman*)]
\item Let $ \Omega = \R^2 $, then the Gaussian-like function
\begin{equation*}
    f_0(x) = \sum_{k=0}^\infty \frac{\big(\!-\!H(x)\big)^k}{k!} = \exp\big(\!-\!x_1^2 - \tfrac{1}{2} x_2^2\big)
\end{equation*}
is a conservation law, which respects zero boundary conditions at infinity. In order to construct additional eigenfunctions, we can multiply $ f_0 $ by powers of $ f_{\lambda_1} $ and $ f_{\lambda_2} $. These functions are bounded and tend to zero for $ \norm{x} \to \infty $.
\item As an example with a bounded domain, consider now $ \Omega = \big\{ x \in \R^2: x_1^2 + \frac{1}{2} \ts x_2^2 < 1 \big\} $, i.e., an ellipse with semi-minor axis $ 1 $ and semi-major axis $ \sqrt{2} $. Then
\begin{equation*}
    f_0(x) = 1 - x_1^2 - \tfrac{1}{2} x_2^2
\end{equation*}
is a conservation law that vanishes on the boundary (i.e., Lemma~\ref{lem:invariant subspaces 2} applies) and can again be multiplied by powers of $ f_{\lambda_1} $ and $ f_{\lambda_2} $ to obtain additional eigenfunctions that satisfy the boundary conditions.
\end{enumerate}

\begin{figure}
    \centering
    \begin{minipage}[t]{0.35\linewidth}
        \centering
        \subfiguretitle{(a)}
        \vspace*{1ex}
        \includegraphics[width=\linewidth]{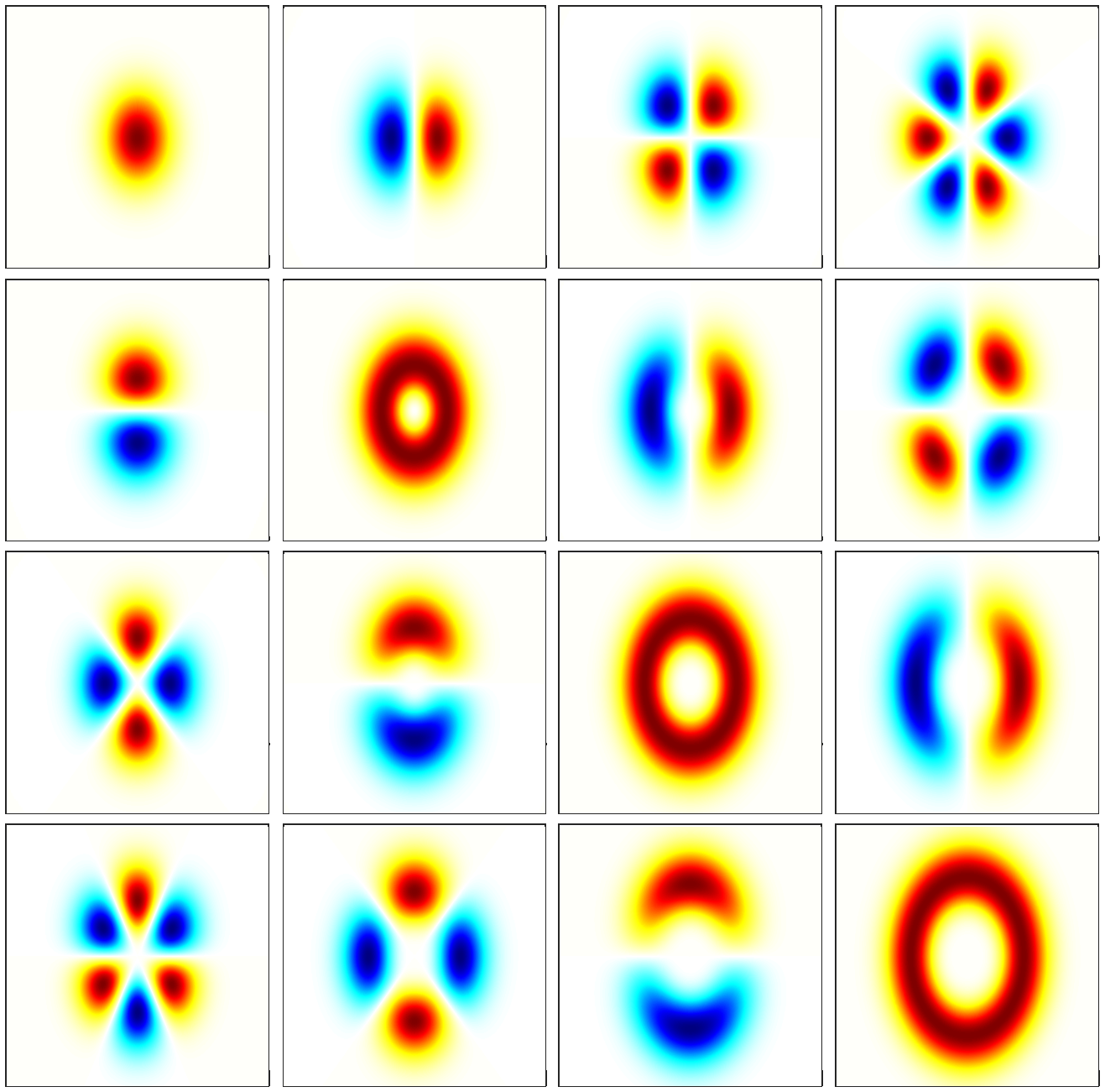}
    \end{minipage}
    \hspace*{6ex}
    \begin{minipage}[t]{0.327\linewidth}
        \centering
        \subfiguretitle{(b)}
        \vspace*{1ex}
        \includegraphics[width=\linewidth]{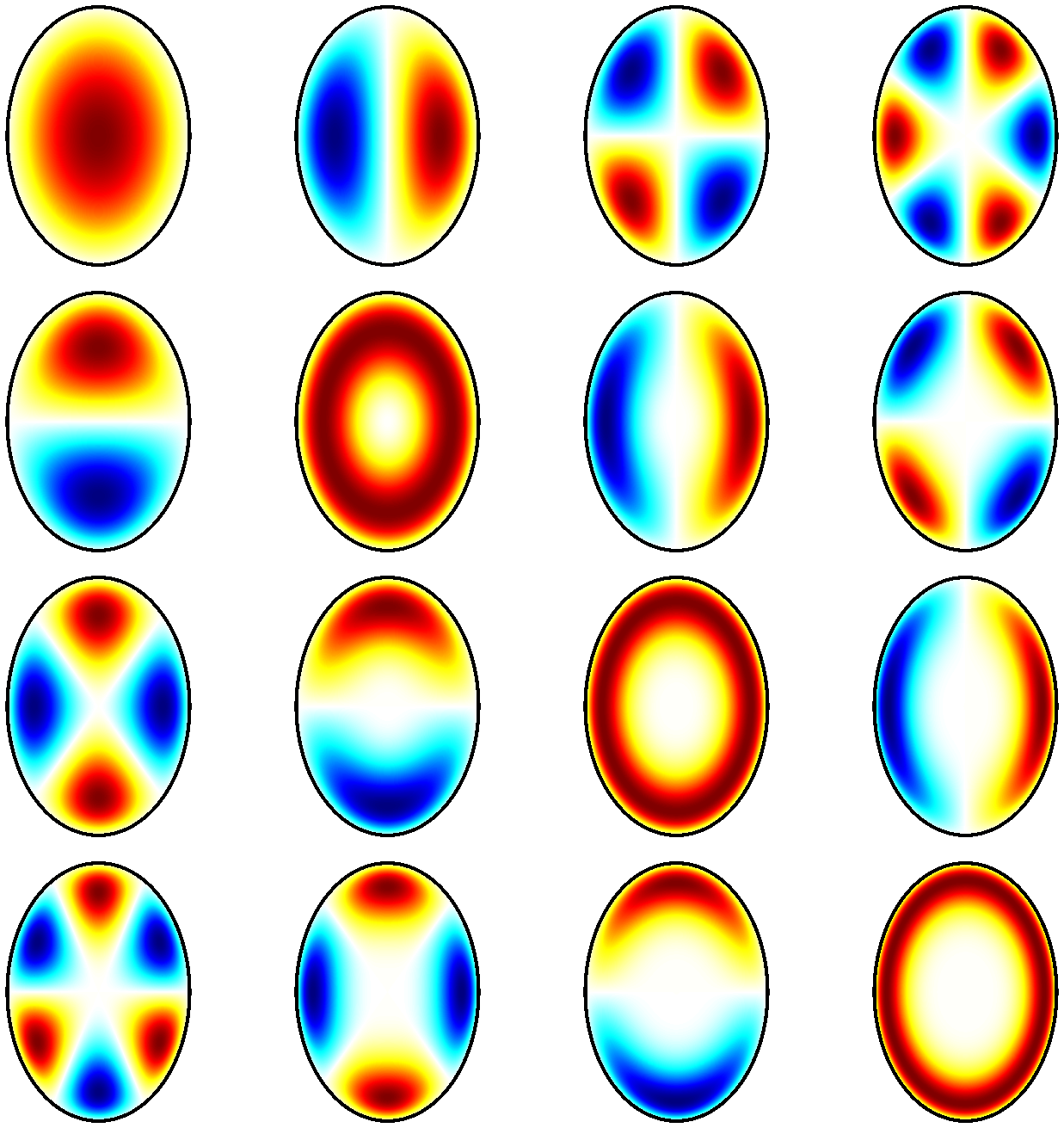}
    \end{minipage}
    \caption{Real part (if $ k_1 \le k_2 $) or imaginary part (if $ k_1 > k_2 $ since the real part would be the same as that of the function with $ k_1 $ and $ k_2 $ swapped) of the eigenfunctions $ f_\lambda(x) = f_0(x) \ts f_{\lambda_1}(x)^{k_1} \ts f_{\lambda_2}(x)^{k_2} $ of the Koopman generator (and hence also Koopman--von Neumann generator) associated with the undamped oscillator defined on the domains (a)~$ \Omega = \R^2 $ and (b)~$ \Omega = \big\{ x \in \R^2: x_1^2 + \frac{1}{2} \ts x_2^2 < 1 \big\} $, where $ k_1, k_2 \in \{0, 1, 2, 3\} $ are the row and column numbers, respectively. The eigenfunctions are plotted for (a) $ x \in [-4, 4]^2 $ and (b)~$ x \in \Omega $. Red represents positive and blue negative values. The corresponding eigenvalues are $ \lambda = \mathrm{i} \ts \sqrt{2}(k_1 - k_2) $, i.e., multiples of the angular frequency~$ \omega $,  All eigenvalues are as expected purely imaginary.}
    \label{fig:UO eigenfunctions}
\end{figure}

A few eigenfunctions of the Koopman generator are shown in Figure~\ref{fig:UO eigenfunctions}. Since $ \mathcal{L}^* = \mathcal{Q} = -\mathcal{L} $ here and the constructed eigenfunctions for the bounded domain case satisfy the boundary conditions, these functions are also eigenfunctions of the Perron--Frobenius generator and Koopman--von Neumann generator, associated with the eigenvalues $ \mu = \nu = -\lambda $.

\subsection{Damped oscillator}

Let us now consider the damped case. We again choose $ \omega = \sqrt{2} $, but now set $ \gamma = 2 $ so that
\begin{equation*}
    B =
    \begin{bmatrix}
        0 & 1 \\
        -2 & -2
    \end{bmatrix}.
\end{equation*}
Using Lemma~\ref{lem:linear ODE}, the principal eigenvalues and eigenfunctions of the Koopman generator are
\begin{alignat*}{2}
    \lambda_1 &= -1 + \mathrm{i}, &\qquad f_{\lambda_1}(x) &= x_1 + \tfrac{1}{2} (1 - \mathrm{i}) \ts x_2, \\
    \lambda_2 &= -1 - \mathrm{i}, &       f_{\lambda_2}(x) &= x_1 + \tfrac{1}{2} (1 + \mathrm{i}) \ts x_2.
\end{alignat*}
By taking integer powers of these eigenfunctions, the grid of eigenvalues $ \lambda = -(k_1 + k_2) + \mathrm{i} \ts (k_1 - k_2)$ and eigenfunctions shown in Figure~\ref{fig:DO eigenfunctions} can be generated. These eigenfunctions again grow to infinity with increasing $ \norm{x} $. Unlike for the undamped oscillator, we cannot easily find a conservation law in this case. We can, however, find a bounded domain $ \Omega = \big\{ x \in \R^2: x_1^2 + x_1 \ts x_2 + \frac{1}{2} \ts x_2^2 < 1 \big\} $ that satisfies the forward-invariance condition. The boundary of this domain is a level set of the eigenfunction for $ k_1 = k_2 = 1 $, i.e., $ f_\lambda = f_{\lambda_1}(x) \ts f_{\lambda_1}(x) = x_1^2 + x_1 \ts x_2 + \frac{1}{2} x_2^2 $, with eigenvalue $ \lambda = - 2 $. The function $ f_\lambda $ is not a conservation law, and hence we cannot simply multiply the Koopman generator eigenfunctions by $ f_\lambda $ to obtain eigenfunctions of $ \mathcal{L}^* $ or $ \mathcal{Q} $. In fact, an eigenfunction of the Perron--Frobenius operator would be a Dirac distribution centered at the origin. However, such eigenfunctions capture only local information about the dynamics~\cite{MM16}. This example illustrates some of the structural differences of the spectra of $ \mathcal{L} $ and $ \mathcal{Q} $ that can occur.

Nevertheless, we can still use the Perron--Frobenius and Koopman--von Neumann operators or generators to propagate probability densities or wavefunctions. This will be illustrated in Section~\ref{sec:numerical results}.

\begin{figure}
    \centering
    \begin{minipage}[t]{0.36\linewidth}
        \centering
        \subfiguretitle{(a)}
        \vspace*{2ex}
        \includegraphics[width=\linewidth]{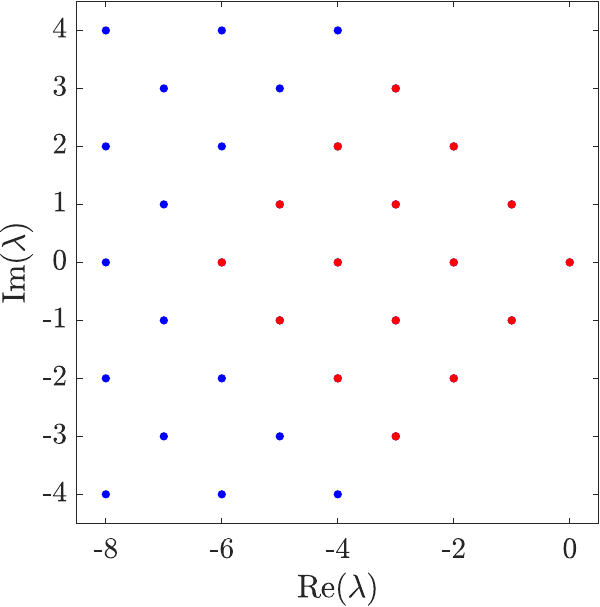}
    \end{minipage}
    \hspace*{6ex}
    \begin{minipage}[t]{0.327\linewidth}
        \centering
        \subfiguretitle{(b)}
        \vspace*{1ex}
        \includegraphics[width=\linewidth]{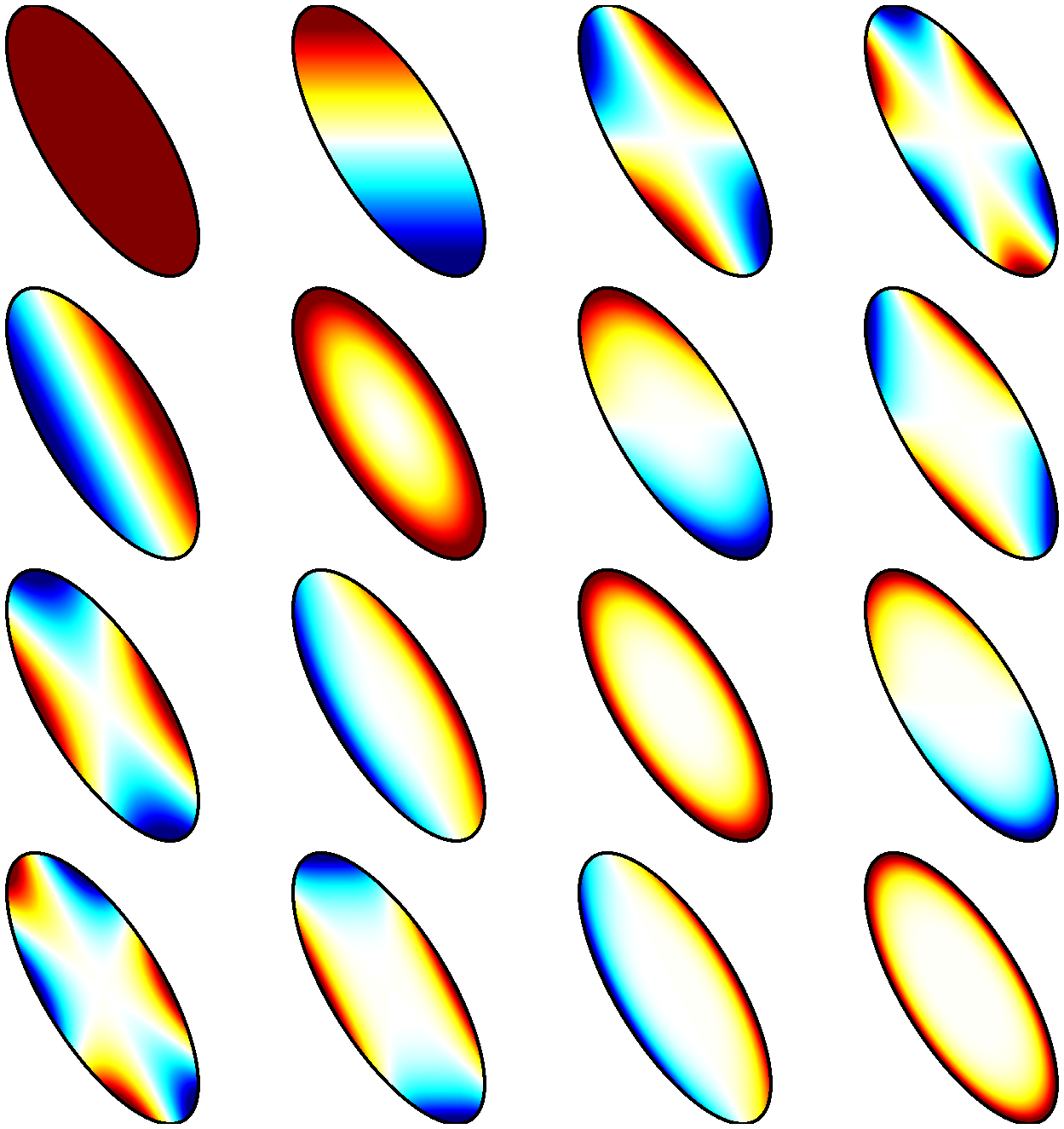}
    \end{minipage}
    \caption{(a) Lattice structure generated by the principal eigenvalues of the Koopman generator associated with the damped oscillator. The eigenvalues marked in red correspond to the eigenfunctions shown on the right. (b) Real or imaginary parts of the eigenfunctions $ f_\lambda(x) = f_{\lambda_1}(x)^{k_1} \ts f_{\lambda_2}(x)^{k_2} $ of the Koopman generator defined on $ \Omega = \big\{ x \in \R^2: x_1^2 + x_1 \ts x_2 + \frac{1}{2} \ts x_2^2 < 1 \big\} $. Note that the eigenfunctions do not vanish on the boundary. The corresponding eigenvalues are $ \lambda = -(k_1 + k_2) + \mathrm{i} \ts (k_1 - k_2) $, where $ k_1, k_2 \in \{0, 1, 2, 3\} $ are the row and column numbers, respectively.}
    \label{fig:DO eigenfunctions}
\end{figure}

\begin{remark}
It was shown in \cite{MM16} that eigenfunctions of the Koopman operator or generator can be used to analyze the stability of dynamical systems. Indeed, the domain $ \Omega $ chosen for the damped oscillator is forward-invariant and the origin $ x^* = 0 $ is a stable fixed point. A Lyapunov function is then an observable $ V(x) > 0 $ for $ x \ne x^* $ that satisfies $ \mathcal{L} V(x) < 0 $ for all $ x \ne x^* $. The eigenfunction for $ k_1 = k_2 = 1 $ with eigenvalue $ \lambda = - 2 $ has exactly this property.
\end{remark}

\subsection{Lotka--Volterra model}

As a prototypical nonlinear dynamical system, we choose the Lotka--Volterra model $ \dot{x} = b(x) $, with
\begin{equation*}
    b(x) =
    \begin{bmatrix}
        x_1 \ts (1 - x_2) \\
        x_2 \ts (x_1 - 1)
    \end{bmatrix}
\end{equation*}
and $ \div(b) = x_1 - x_2 $, which describes the dynamics of two interacting species, one being a predator and the other its prey. Both the predator and prey populations oscillate around a fixed point. Unlike for the undamped oscillator, the period length now depends on an associated ``energy'' \cite{Waldvogel86}. The Lotka--Volterra model can be turned into a Hamiltonian system by a nonlinear coordinate transformation, but in its original form the dynamics are not Hamiltonian. A well-known conserved quantity of the system is
\begin{equation*}
    f_0(x) = x_1 + x_2 - \log(x_1) - \log(x_2).
\end{equation*}
We define the domain $ \Omega = \big\{ x \in \R^2 : x_1 + x_2 - \log(x_1) - \log(x_2) < 3 \big\} $ by a level set of $ f_0 $, or, equivalently, a closed orbit of the system. It was shown in \cite{MQ15} that $ \rho_0(x) = \frac{1}{x_1 \ts x_2} $ is an eigenfunction of the Perron--Frobenius generator with eigenvalue $ \mu = 0 $ and thus, using Lemma~\ref{lem:properties of operators}, also the function
\begin{equation*}
    \widetilde{\rho}_0(x) = \big(3 - f_0(x)\big) \rho_0(x),
\end{equation*}
which now satisfies the boundary conditions. This (unnormalized) density is invariant under the dynamics, which implies that its square-root is an invariant wavefunction.

\section{Numerical results}
\label{sec:numerical results}

We will now approximate the Koopman--von Neumann generator using data-driven approaches as well as finite element methods. We then use the discretized generator to compute eigenvalues and eigenfunctions or to propagate wavefunctions.

\subsection{Undamped oscillator}

Let us first consider the undamped oscillator on the bounded domain $ \Omega $ defined in Section~\ref{sec:benchmark problems}. For this example, we will construct both an analytical discretization using a polynomial basis set and a generic approximation using random features, and then study their approximation properties. In addition, we use the analytical discretization to construct a quantum circuit representation for the Koopman--von Neumann operator $ e^{t\tilde{Q}} $.

\paragraph{Monomial basis.}

First, based on Lemma~\ref{lem:invariant subspaces 2}, we select the basis functions
\begin{equation*}
    \phi(x) = (1 - x_1^2 - \tfrac{1}{2} x_2^2)
    \begin{bmatrix}
        1 \\
        x_1 \\
        x_2 \\
        x_1^2 \\
        x_1 \ts x_2 \\
        x_2^2
    \end{bmatrix},
\end{equation*}
that is, polynomial functions up to degree 2 multiplied by the conservation law $ f_0 $. As we have seen, this basis set defines an invariant subspace for the Koopman--von Neumann generator. The Galerkin matrices $ G $ and $ A $ can in this case be computed analytically. We have
\begin{equation*}
    G = \sqrt{2} \ts \pi
    \begin{bmatrix}
         \frac{1}{3} &            0 &            0 &  \frac{1}{24} &             0 &  \frac{1}{12} \\
                   0 & \frac{1}{24} &            0 &             0 &             0 &             0 \\
                   0 &            0 & \frac{1}{12} &             0 &             0 &             0 \\
        \frac{1}{24} &            0 &            0 &  \frac{1}{80} &             0 & \frac{1}{120} \\
                   0 &            0 &            0 &             0 & \frac{1}{120} &             0 \\
        \frac{1}{12} &            0 &            0 & \frac{1}{120} &             0 &  \frac{1}{20}
    \end{bmatrix}
    \quad \text{and} \quad
    A = \sqrt{2} \ts \pi
    \begin{bmatrix}
        0 &            0 &             0 &            0 &             0 &             0 \\
        0 &            0 & -\frac{1}{12} &            0 &             0 &             0 \\
        0 & \frac{1}{12} &             0 &            0 &             0 &             0 \\
        0 &            0 &             0 &            0 & -\frac{1}{60} &             0 \\
        0 &            0 &             0 & \frac{1}{60} &             0 & -\frac{1}{30} \\
        0 &            0 &             0 &            0 &  \frac{1}{30} &             0
    \end{bmatrix}
\end{equation*}
and thus obtain matrix representations of the Koopman--von Neumann generator for both the original and whitened polynomial basis sets:
\begin{equation} \label{eq:Q_matrix_HO}
    Q =
    \begin{bmatrix}
        0 &  0 &  0 &  0 &  0 & 0 \\
        0 &  0 &  2 &  0 &  0 & 0 \\
        0 & -1 &  0 &  0 &  0 & 0 \\
        0 &  0 &  0 &  0 &  2 & 0 \\
        0 &  0 &  0 & -2 &  0 & 4 \\
        0 &  0 &  0 &  0 & -1 & 0
    \end{bmatrix}
    \quad \text{and} \quad
    \widetilde{Q} =
    \begin{bmatrix}
               0 & -\sqrt{2} &    0 &    0 &    0 &    0 \\
        \sqrt{2} &         0 &    0 &    0 &    0 &    0 \\
               0 &         0 &    0 & -z_1 & -z_2 & -z_3 \\
               0 &         0 & z_1 &     0 &    0 &    0 \\
               0 &         0 & z_2 &     0 &    0 &    0 \\
               0 &         0 & z_3 &     0 &    0 &    0
    \end{bmatrix},
\end{equation}
where $ z_1, z_2, z_3 > 0 $. The matrix $ A $ is already skew-symmetric since the system is Hamiltonian, but the matrix $ Q $ is not as we still have to orthonormalize the basis functions to obtain the skew-symmetric matrix $ \widetilde{Q} $. As the basis set defines an invariant subspace for $\mathcal{Q}$, the matrix $\widetilde{Q}$ can be used to exactly propagate wavefunctions in the linear span of the basis functions, and to compute select eigenvalues of $ \mathcal{Q} $.

Indeed, the spectrum of $ \widetilde{Q} $ is $ \sigma(\widetilde{Q}) = \big \{ 0, \pm \mathrm{i} \ts \sqrt{2}, \pm \mathrm{i} \ts 2 \sqrt{2} \big\} $, where the eigenvalue $ \nu = 0 $ has multiplicity two, corresponding to the first six combinations $ \mathrm{i} \sqrt{2}(k_1 - k_2) $ of the principal eigenvalues. The resulting propagator $e^{t\widetilde{Q}}$ is now unitary. More precisely, given the coefficients $ c $ defining the function $ \psi(x) = c^\top \widetilde{\phi}(x) $ at time $ 0 $, we obtain the wavefunction $ \psi(x) = (e^{t \ts \widetilde{Q}} \ts c)^\top \widetilde{\phi}(x) $ at time~$ t $.

\begin{figure}
    \centering
    \begin{minipage}[t]{0.36\linewidth}
        \centering
        \subfiguretitle{(a)}
        \vspace*{1ex}
        \includegraphics[width=0.99\linewidth]{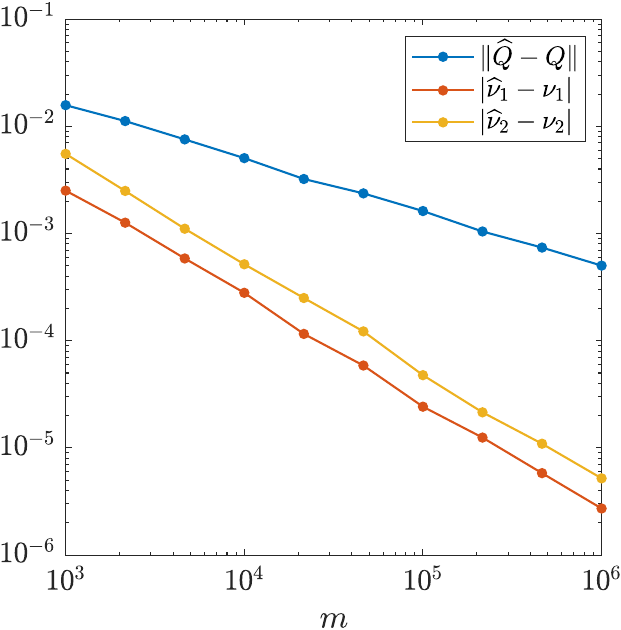}
    \end{minipage}
    \hspace*{6ex}
    \begin{minipage}[t]{0.3\linewidth}
        \centering
        \subfiguretitle{(b)}
        \vspace*{1ex}
        \includegraphics[width=0.99\linewidth]{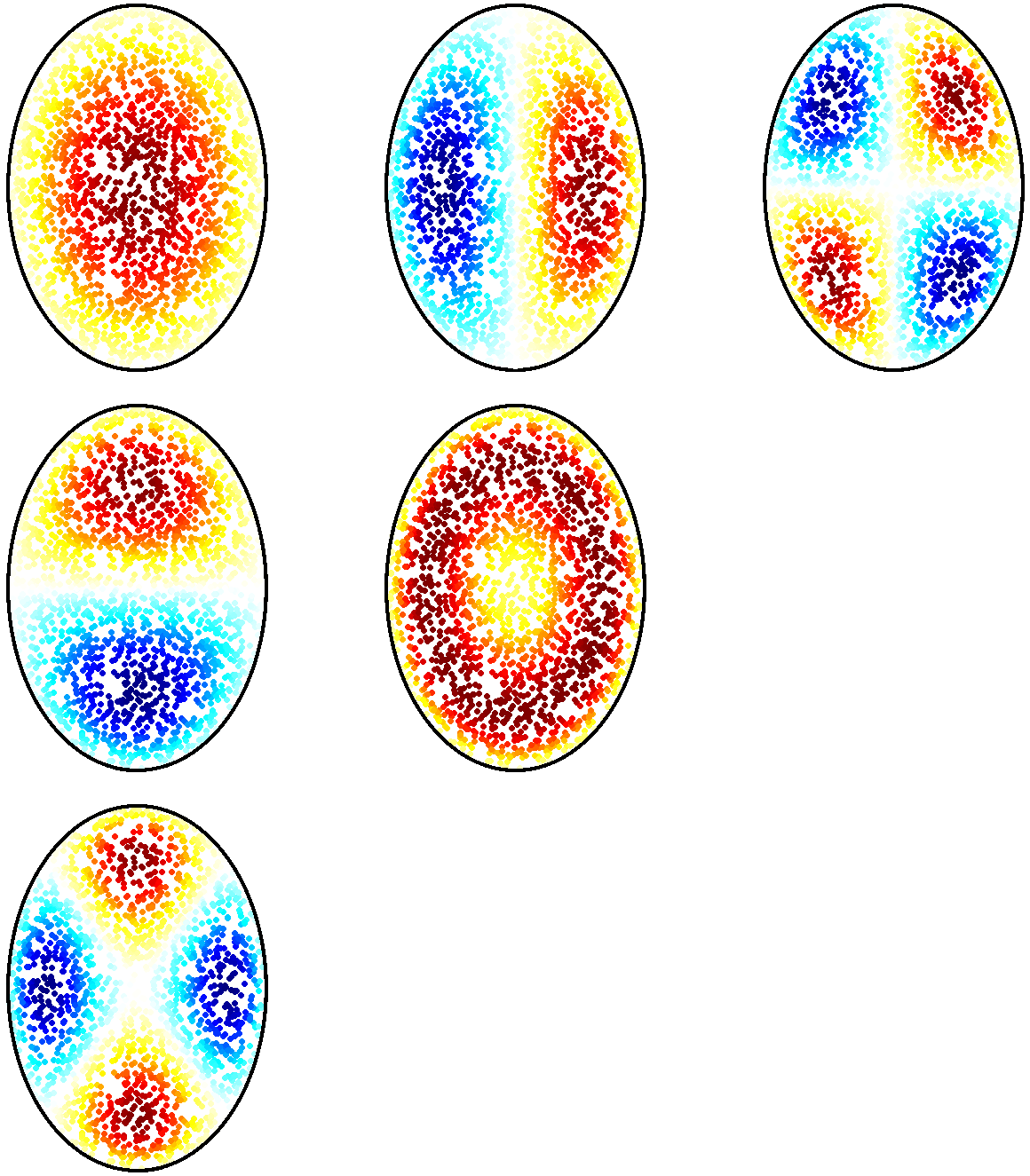}
    \end{minipage}
    \hspace*{1ex}
    \begin{minipage}[t]{0.079\linewidth}
        \centering
        \vspace*{0.5ex}
        \includegraphics[width=0.99\linewidth]{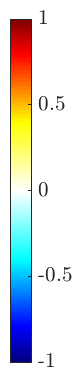}
    \end{minipage}
    \caption{(a) The convergence of the estimated matrix $ \widehat{Q} $ to the true matrix representation $ Q $ is approximately $ \mathcal{O}(m^{\nicefrac{-1}{2}}) $, which is the expected Monte Carlo convergence rate, and the convergence to the analytically computed eigenvalues $ \nu_1 = \mathrm{i}\sqrt{2} $ and $ \nu_2 = \mathrm{i} \ts 2 \sqrt{2} $ is roughly $ \mathcal{O}(m^{-1}) $. (b)~Numerically computed eigenfunctions for $ m = 2000 $, cf.\ Figure~\ref{fig:UO eigenfunctions}(b). We obtain only approximations of the eigenfunctions $ f_\lambda(x) = f_0(x) \ts f_{\lambda_1}(x)^{k_1} \ts f_{\lambda_2}(x)^{k_2} $ with $ k_1 + k_2 \le 2 $ due to the chosen dictionary, corresponding to the upper left triangle in Figure~\ref{fig:UO eigenfunctions}\ts(b).}
    \label{fig:UO convergence}
\end{figure}

We also verify the accuracy of numerical estimates of the Koopman--von Neumann generator based on the gEDMD approach outlined in Section~\ref{sec:approximation}. We first uniformly sample $ m $ data points $ x^{(l)} $ in $ \Omega $, define $ \dot{x}^{(l)} = b\big(x^{(l)}\big) $, and then compute $ \widehat{Q} $ and its eigenvalues and eigenvectors, which then determine the eigenfunctions.\!\footnote{The time derivatives for the training data points could also be estimated using finite difference approximations if we only have access to trajectory data. However, since we are interested in the convergence of the method itself, we want to avoid additional numerical errors.} The numerical errors for different values of $ m $ as well as the eigenfunctions obtained for $ m = 2000 $ are shown in Figure~\ref{fig:UO convergence}.

\paragraph{Quantum circuit representation.}

We also use this example to show how the projected Koopman--von Neumann operator gives rise to a quantum circuit representation, which can be realized on a quantum computer. The matrix $ \widetilde{Q} $ in~\eqref{eq:Q_matrix_HO} and hence also $ e^{t \ts \widetilde{Q}} $ consist of a block of size two and a block of size four. The $ 2 \times 2 $ block of the propagator is
\begin{equation*}
    \begin{bmatrix}
        \cos(\sqrt{2} \ts t) & -\sin(\sqrt{2} \ts t) \\
        \sin(\sqrt{2} \ts t) & \cos(\sqrt{2} \ts t)
    \end{bmatrix}
\end{equation*}
and can be described by the single-quantum gate $ R_y(2 \sqrt{2} \ts t) $. Additionally, defining $ r = \sqrt{z_1^2 + z_2^2 + z_3^2} $, $ s = \sqrt{z_2^2+z_3^2} $, and the angles
\begin{equation*}
    \theta = 2\arctan\left(\frac{s}{z_1}\right),\quad
    \phi = 2\arctan\left(\frac{z_2}{z_3}\right),\quad
    \beta = 2 \ts r \ts t,
\end{equation*}
we can express the $ 4 \times 4 $ block of the propagator as a quantum circuit consisting only of Pauli-$X$/NOT gates and controlled $ R_y $ rotations. The derivation of the circuit representation can be found in Appendix~\ref{app:circuit representation}. The overall quantum circuit is shown in Figure~\ref{fig:circuit representation}. Note that we do not claim any quantum advantage in the present setting. The purpose of this construction is rather to illustrate that a simple classical dynamical system can be encoded within a quantum-circuit framework.

\begin{figure}
    \centering
    \begin{quantikz}[column sep=0.35cm]
        \lstick{$q_1$} & \qw \gategroup[wires=1,steps=7,style={dashed,rounded corners,inner sep=2pt}, label style={label position=below,anchor=north,yshift=-0.25cm}]{} & \qw & \qw & \gate{R_y(2 \sqrt{2} \ts t)} & \qw & \qw & \qw & \qw \\[2ex]
        \lstick{$q'_1$} &
        \ctrl{1}
        \gategroup[wires=2,steps=2,style={dashed,rounded corners,inner xsep=0pt}, label style={label position=below,anchor=north,yshift=-0.25cm}]{$V$}
        & \gate{R_y(-\theta)}  &
        \gate{X} \gategroup[wires=2,steps=3,style={dashed,rounded corners,inner xsep=0pt}, label style={label position=below,anchor=north,yshift=-0.25cm}]{$U$} & \ctrl{1} & \gate{X} &
        \gate{R_y(\theta)} \gategroup[wires=2,steps=2,style={dashed,rounded corners,inner xsep=0pt}, label style={label position=below,anchor=north,yshift=-0.25cm}]{$V^\top$}
        & \ctrl{1}  &
        \qw \\
        \lstick{$q'_2$} &
         \gate{R_y(\phi)} & \ctrl{-1} &
        \qw & \gate{R_y(\beta)} & \qw &
         \ctrl{-1} & \gate{R_y(-\phi)} &
        \qw
        \end{quantikz}
    \caption{Quantum circuit representation of the propagator $ e^{t \widetilde{Q}} $ consisting of two separate circuits. The $ 2 \times 2 $ block is represented by one $ R_y $ gate acting on a single qubit, while the $ 4 \times 4 $ block can be decomposed into controlled $ R_y $ gates and Pauli-$ X $ gates acting on two qubits.}
    \label{fig:circuit representation}
\end{figure}
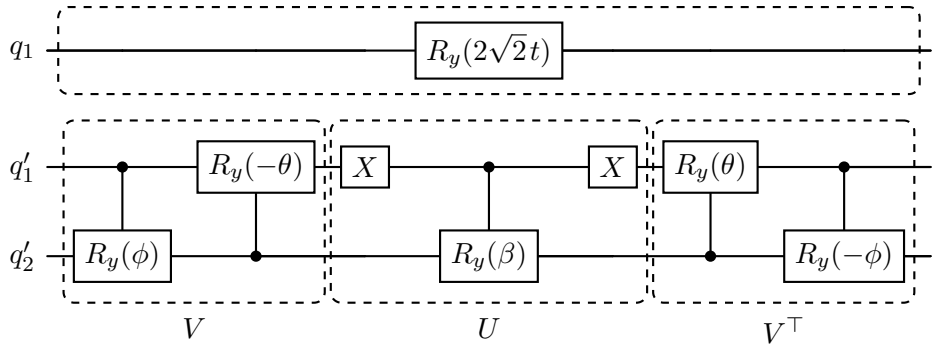

\paragraph{Random feature basis.}

In addition to the polynomial dictionary, we also evaluate numerical approximations of the Koopman--von Neumann generator by a more system-agnostic basis set. We introduce a basis of $ n $ tapered random Fourier features (RFFs)
\begin{equation} \label{eq:tapered_rff}
    \phi_i(x) = f_0(x) \cos(\omega_i^\top x + b_i).
\end{equation}
The frequencies $ \omega_i $ are drawn from a normal distribution with bandwidth $ \sigma^{-1} $, which is the spectral measure of a Gaussian kernel with bandwidth $ \sigma $. Moreover, $ b_i \in [0, 2\pi] $ are uniform random phase shifts, and $ f_0 $ the conservation law as above. The random features approximate the Gaussian kernel in the limit of infinite $ n $.

In our experiments, we set the Gaussian bandwidth to $ \sigma = 0.5 $, and use $ n = 300 $ random features. In Figure~\ref{fig:UO RFFs}\ts(a), we verify that gEDMD models trained using this basis can accurately predict the (tapered) system state variables $ f_0(x) \ts x_1 $ and $ f_0(x) \ts x_2 $. To this end, we learn gEDMD models using increasing amounts of uniform training data in $ \Omega $ and predict the system state over one period of the system. As expected, the prediction error decreases with increasing training data size at a roughly inverse square root rate. Furthermore, we also verify that the spectrum of the Koopman--von Neumann generator can be correctly identified by the RFF models. We see in Figure~\ref{fig:UO RFFs}\ts(b) that the eigenvalues $ \nu \in \{\pm i\sqrt{2}k\} $ for $ 0 \leq k \leq 5 $ are accurately recovered. We also notice that the method is now subject to spectral pollution, but spurious eigenvalues can be reliably removed by applying the \emph{residual score} introduced in~\cite{Colbrook_Ayton_Szoeke_2023}.

\begin{figure}
    \centering
    \begin{minipage}[t]{0.45\linewidth}
        \centering
        \subfiguretitle{(a)}
        \vspace*{1ex}
        \includegraphics[width=0.99\linewidth]{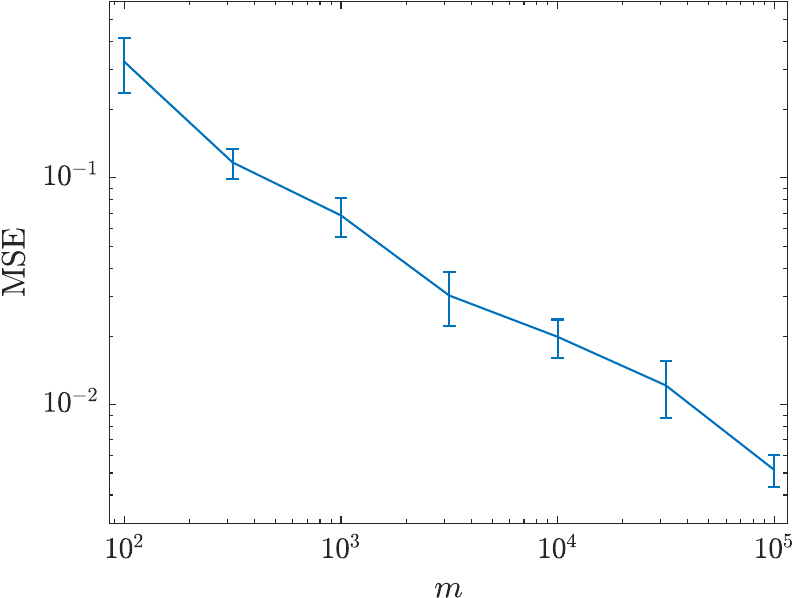}
    \end{minipage}
    \hspace*{6ex}
    \begin{minipage}[t]{0.45\linewidth}
        \centering
        \subfiguretitle{(b)}
        \vspace*{0.3ex}
        \includegraphics[width=0.99\linewidth]{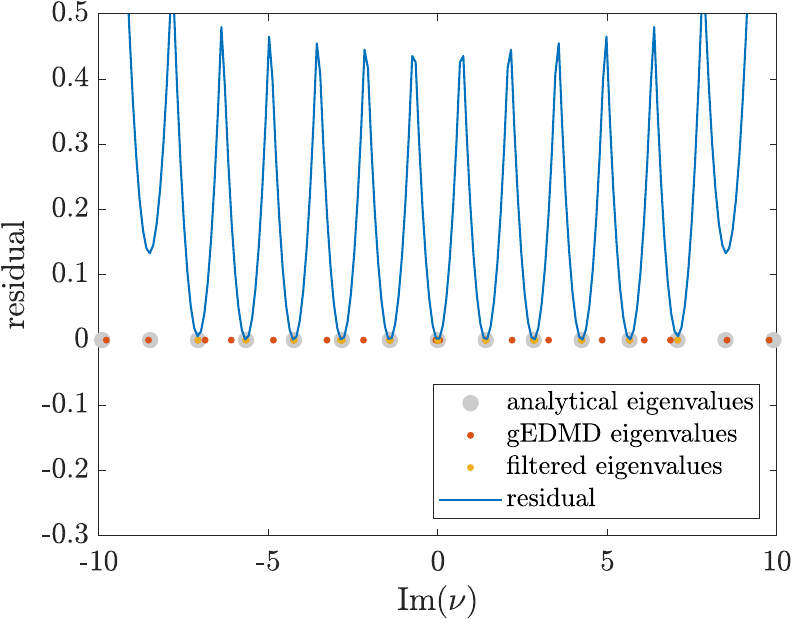}
    \end{minipage}
    \caption{(a) Mean-squared error for the state variables $x_1$ and $x_2$ predicted by gEDMD models with RFF basis~\eqref{eq:tapered_rff} over one period of the undamped oscillator as a function of the number of training data points $ m $. The error bars correspond to ten independent trials. (b)~Comparison of the imaginary parts of the analytically computed eigenvalues (gray dots), the gEDMD eigenvalues for $ m = 10^5 $ (red dots), and the filtered eigenvalues (yellow dots) for which the residual score (blue curve) from~\cite{Colbrook_Ayton_Szoeke_2023} is less than $ 10^{-2} $.}
    \label{fig:UO RFFs}
\end{figure}

\subsection{Damped oscillator}

Let us now consider the damped oscillator. We use this system as a first example of a non-Hamiltonian system, where the Koopman and Koopman--von Neumann generators describe genuinely different quantities. To illustrate the evolution of wavefunctions under the Koopman--von Neumann equation, we use a high-accuracy PDE integrator, thus minimizing numerical and statistical errors for this example. Naturally, this procedure is not scalable to higher-dimensional systems.

We decompose the domain chosen in Section~\ref{sec:benchmark problems} with the aid of \href{https://gmsh.info}{Gmsh} \cite{Gmsh} into $ 447 $ triangles and then compute the matrices $ A $ and $ G $ using \href{https://fenicsproject.org}{FEniCS} \cite{DOLFINx}. The dictionary in this case comprises piecewise linear functions. We define the initial wavefunction to be a superposition of two Gauss-like functions and then simulate its evolution in time. The numerical results are shown in Figure~\ref{fig:DO simulation}. Due to the damping, the Gauss peaks quickly spiral towards the origin as expected. At the same time, the height of the peaks increases and the bandwidth decreases. The simulation becomes unstable for larger times $ t $ as the wavefunction cannot be accurately resolved using the chosen basis anymore. For smaller values of the damping coefficient $ \gamma $, the spiraling effect would be more pronounced. The results show that the dynamics of non-Hamiltonian systems is described accurately by wavefunctions evolving under the Koopman--von Neumann equation, whose propagator can, using again the linear transformation of the basis functions, be approximated by unitary matrices and hence quantum circuits.

\begin{figure}
    \centering
    \begin{minipage}[t]{0.18\linewidth}
        \centering
        \subfiguretitle{(a)}
        \vspace*{1ex}
        \includegraphics[width=0.99\linewidth]{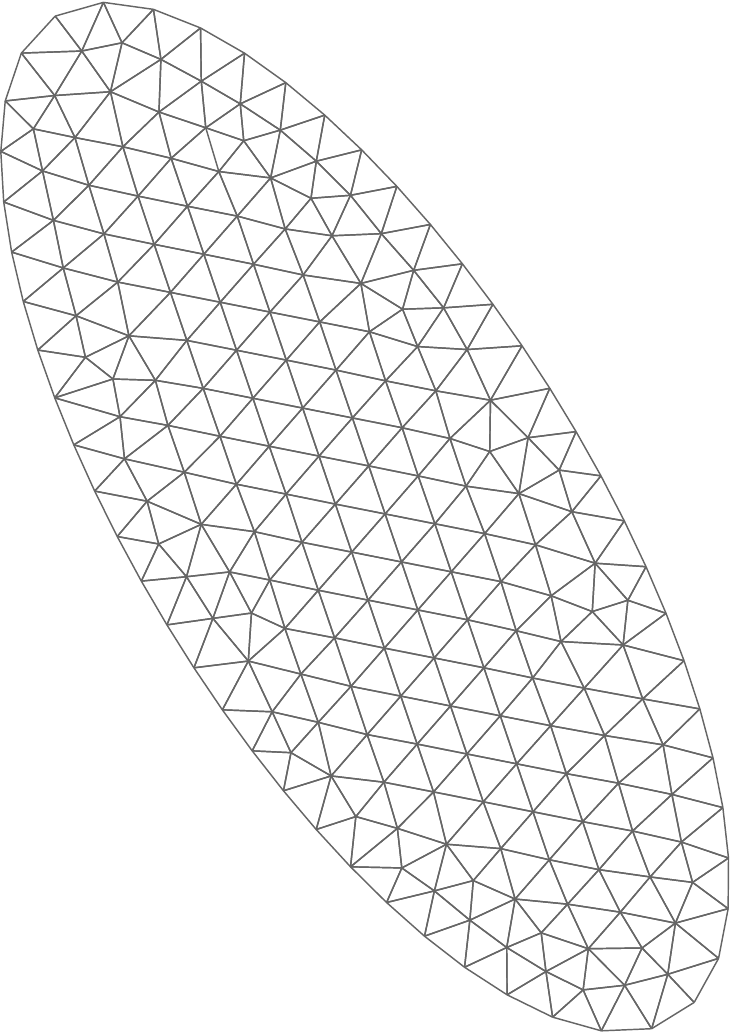}
    \end{minipage}
    \begin{minipage}[t]{0.18\linewidth}
        \centering
        \subfiguretitle{(b)}
        \vspace*{1ex}
        \includegraphics[width=\linewidth]{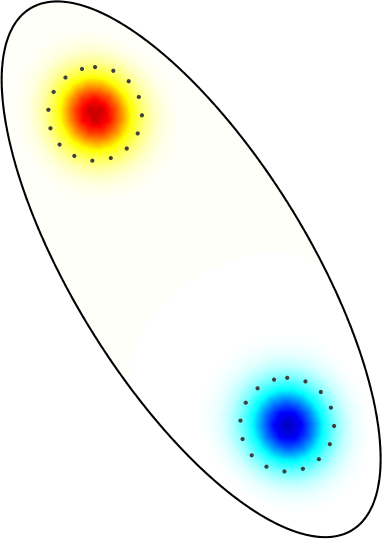}
    \end{minipage}
    \begin{minipage}[t]{0.18\linewidth}
        \centering
        \subfiguretitle{(c)}
        \vspace*{1ex}
        \includegraphics[width=\linewidth]{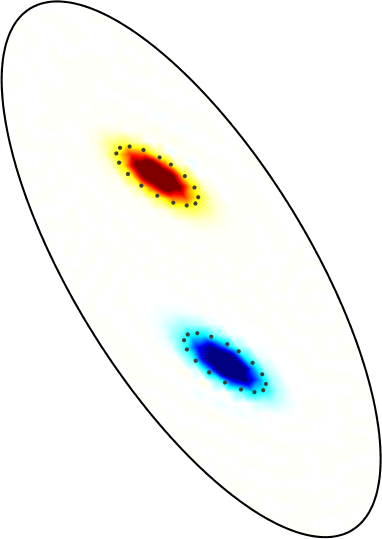}
    \end{minipage}
    \begin{minipage}[t]{0.18\linewidth}
        \centering
        \subfiguretitle{(d)}
        \vspace*{1ex}
        \includegraphics[width=\linewidth]{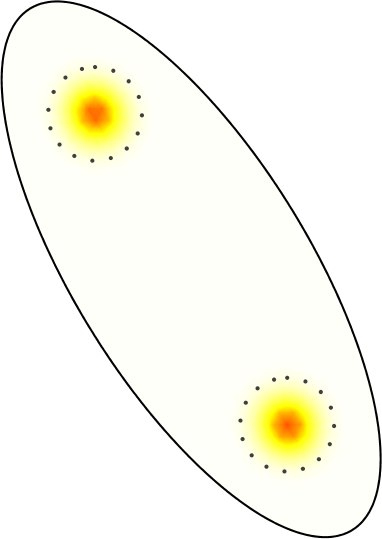}
    \end{minipage}
    \begin{minipage}[t]{0.18\linewidth}
        \centering
        \subfiguretitle{(e)}
        \vspace*{1ex}
        \includegraphics[width=\linewidth]{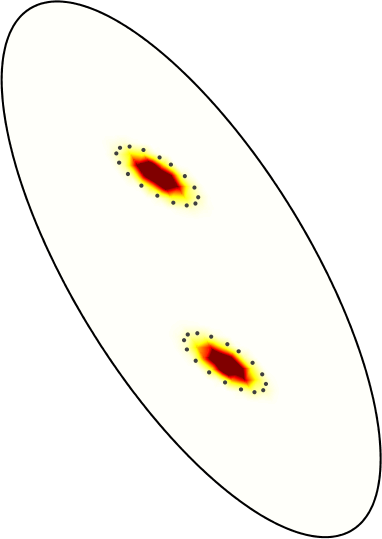}
    \end{minipage}
    \caption{(a) Gmsh triangulation of the domain $ \Omega = \big\{ x \in \R^2: x_1^2 + x_1 \ts x_2 + \frac{1}{2} \ts x_2^2 < 1 \big\} $ chosen for the damped oscillator. (b) Initial wavefunction $ \psi $ at time $ t = 0 $. (c) Wavefunction $ \psi $ at $ t = \frac{1}{2} $. The black dots, shown for the sake of comparison, represent individual particles. (d) Corresponding probability density $ \rho $ at $ t = 0 $. (e) Density $ \rho $ at $ t = \frac{1}{2} $.}
    \label{fig:DO simulation}
\end{figure}

\subsection{Lotka--Volterra model}

As a final example, we consider the nonlinear Lotka--Volterra model introduced in Section~\ref{sec:benchmark problems}. We track the evolution of a Gaussian-like wavefunction centered at $ \mu_0 = \big[\frac{1}{2}, \frac{1}{2}\big]^\top $ with isotropic bandwidth $ \sigma_0 = \sqrt{0.02} $.

\paragraph{FEM basis.}

As in the previous example, we compute a high-accuracy solution of the Koopman--von Neumann equation using finite elements. We first approximate the boundary $ \partial \Omega $ by B-splines and then triangulate the domain. Gmsh generates 3524 nodes and 6959 elements. The mass and stiffness matrices are again computed using FEniCS. We then simulate the Koopman--von Neumann equation from $ t = 0 $ to $ t = 7 $. The numerical results, shown in Figure~\ref{fig:LV simulation}\ts(a)--(d), illustrate that the numerically computed Koopman--von Neumann wavefunction faithfully describes the nonlinear oscillation. From the wavefunction, we can recover the corresponding probability density using Born's rule.

\begin{figure}
    \centering
    \begin{minipage}[t]{0.24\linewidth}
        \centering
        \subfiguretitle{(a) $ t = 0 $}
        \vspace*{1ex}
        \includegraphics[width=\linewidth]{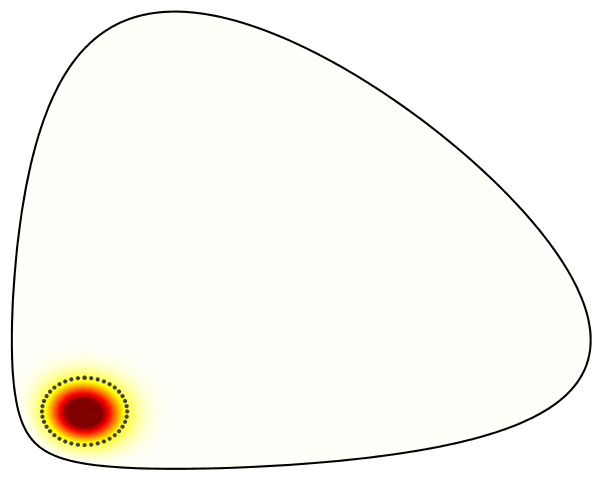}
    \end{minipage}
    \begin{minipage}[t]{0.24\linewidth}
        \centering
        \subfiguretitle{(b) $ t = 2 $}
        \vspace*{1ex}
        \includegraphics[width=\linewidth]{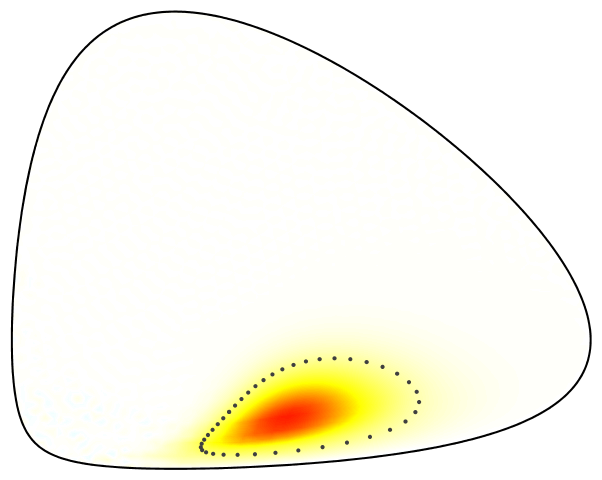}
    \end{minipage}
    \begin{minipage}[t]{0.24\linewidth}
        \centering
        \subfiguretitle{(c) $ t = 4 $}
        \vspace*{1ex}
        \includegraphics[width=\linewidth]{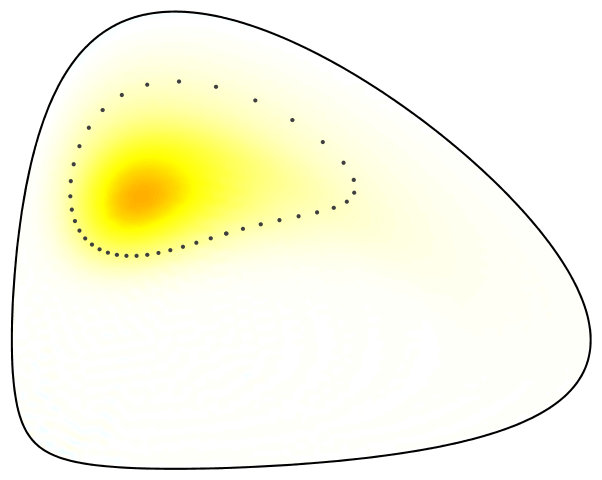}
    \end{minipage}
    \begin{minipage}[t]{0.24\linewidth}
        \centering
        \subfiguretitle{(d) $ t = 6 $}
        \vspace*{1ex}
        \includegraphics[width=\linewidth]{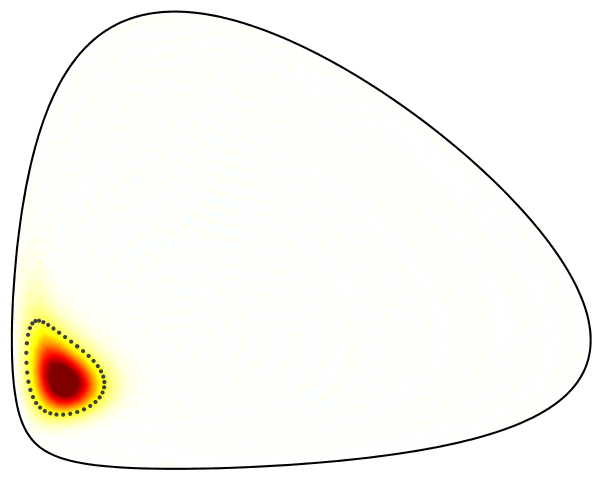}
    \end{minipage} \\[2ex]
    \begin{minipage}[t]{0.24\linewidth}
        \centering
        \subfiguretitle{(e) $ t = 0 $}
        \vspace*{1ex}
        \includegraphics[width=\linewidth]{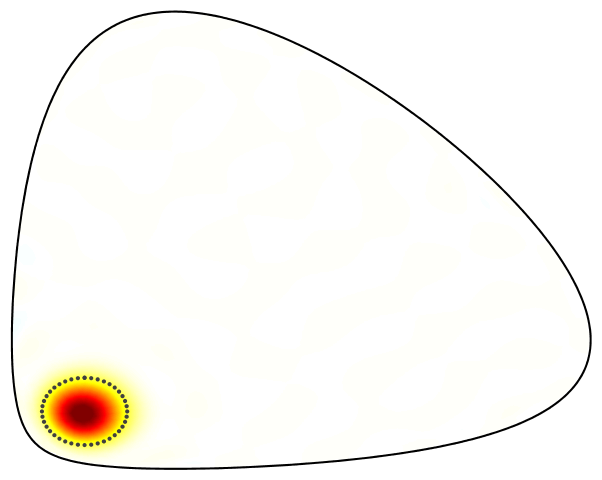}
    \end{minipage}
    \begin{minipage}[t]{0.24\linewidth}
        \centering
        \subfiguretitle{(f) $ t = 2 $}
        \vspace*{1ex}
        \includegraphics[width=\linewidth]{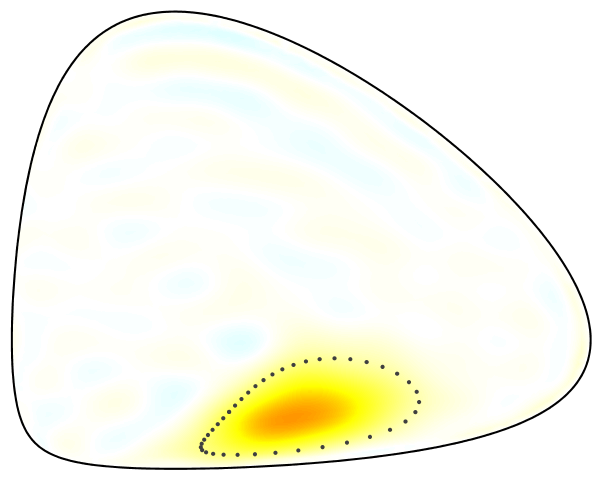}
    \end{minipage}
    \begin{minipage}[t]{0.24\linewidth}
        \centering
        \subfiguretitle{(g) $ t = 4 $}
        \vspace*{1ex}
        \includegraphics[width=\linewidth]{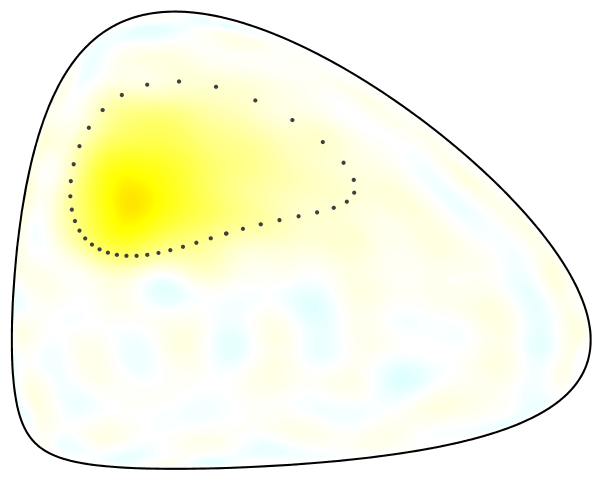}
    \end{minipage}
    \begin{minipage}[t]{0.24\linewidth}
        \centering
        \subfiguretitle{(h) $ t = 6 $}
        \vspace*{1ex}
        \includegraphics[width=\linewidth]{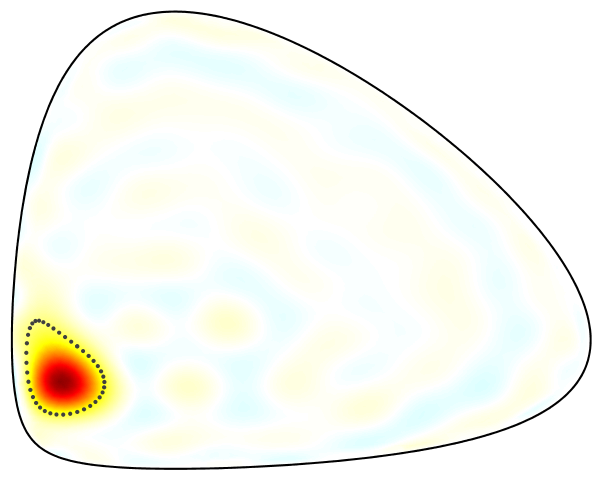}
    \end{minipage}
    \caption{Solution of the Koopman--von Neumann equation associated with the Lotka--Volterra model at different times $ t $ using an FEM basis (top row) and an RFF basis (bottom row). The initial wavefunction $ \psi $ is distorted by the dynamics and rotates in a counterclockwise direction. Note that the shape of the wavefunction will be different after the completion of one cycle since the velocities of the particles depend on the distance from the fixed point $ x = [1, 1]^\top $. The black dots again represent individual particles. The RFF basis leads to small numerical artifacts---even at time $ t = 0 $ since the initial condition needs to be approximated by tapered random Fourier features---but remains stable.}
    \label{fig:LV simulation}
\end{figure}

\paragraph{Random feature basis.}

As for the undamped oscillator, we also compute a data-driven solution to the Koopman--von Neumann equation using a random feature basis set. We again use cosine waves with random frequencies corresponding to a Gaussian kernel with bandwidth $ \sigma $ and uniform phase shifts. This time, each basis function is multiplied by an exponential tapering function
\begin{equation*}
    \eta(x) = \exp\left(-\frac{1}{k f_0(x)^2}\right),
\end{equation*}
to enforce the boundary conditions, where $ f_0 $ is the conservation law for the Lotka--Volterra domain. In the numerical experiments, we set $ \sigma = 0.2 $, $ n = 1000 $, $ k = 5000 $, and use $m = 50\ts000$ uniformly sampled training points. Panels\ts(e)--(h) in Figure~\ref{fig:LV simulation} show the wavefunction predicted by the RFF model. We see that the nonlinear rotation around the domain is correctly captured by the RFF wavefunction. The RFF-based solution introduces small scale fluctuations that the FEM-based model does not display. These artifacts could likely be removed by a more customized basis set and more training data. Nevertheless, we verified that even for our quite agnostic basis set, the mean-squared error between both models at training data size $ m = 100\ts000 $ is only on the order of $ 10^{-4} $, see the referenced code repository for details.

\section{Conclusion}
\label{sec:conclusion}

The Koopman--von Neumann equation describes the evolution of wavefunctions associated with classical dynamical systems. We analyzed the relationships between conventional transfer operators, such as the Koopman generator and Perron--Frobenius generator, and the Koopman--von Neumann framework. The main advantage is that the Koopman--von Neumann generator is skew-adjoint even for non-Hamiltonian systems and the corresponding propagator is unitary. This allows us to represent Galerkin approximations of the operator by unitary matrices, which in turn can be represented by quantum circuits. The results show that the definition of the domain and the choice of basis functions are crucial. Provided that the basis functions satisfy the homogeneous Dirichlet boundary conditions, we can compute approximations of the Koopman--von Neumann generator using either finite element methods or a generalization of gEDMD. For linear systems with conservation laws, it is possible to explicitly construct invariant subspaces and to compute eigenfunctions. The proposed method also allows us to propagate wavefunctions associated with nonlinear and non-Hamiltonian systems. However, if the probability accumulates---for instance in stable fixed points---and converges to a Dirac distribution, this cannot be accurately represented by a set of finitely many basis functions, which then causes numerical instabilities.

So far, we only applied the proposed methods to low-dimensional benchmark problems. Both the finite element method and the gEDMD-based approach suffer from the curse of dimensionality. That is, for high-dimensional systems, approximating the Koopman--von Neumann operator and its eigenvalues and eigenfunctions with sufficient accuracy would require a prohibitively large dictionary. There are several commonly used techniques to mitigate this issue, which can be roughly divided into two different classes: (1) Instead of working with an explicitly defined dictionary, kernel-based methods \cite{WRK15, KSM19, DG20} generate potentially infinite-dimensional feature spaces and can be applied to more complex problems. Kernels can also be efficiently approximated using \emph{random Fourier features} as shown in~\cite{NK23} (and the examples above). Alternatively, the dictionary can be parametrized by a deep neural network, which is then trained based on appropriate loss functions that are typically related to either the reconstruction error or implied timescales~\cite{LDBK17, MPWN18}. Randomized neural networks, where the weights of the hidden layers are randomly selected and only the output layer is optimized, can be used in the same way and are less computationally demanding~\cite{TLK25}. (2) The second frequently used approach, which has in particular been successfully applied to high-dimensional protein-folding problems, is to first project the data onto a dynamically relevant lower-dimensional subspace using, for example, \emph{time-lagged independent component analysis} and to then learn the operators and their eigenvalues and eigenfunctions in the reduced collective-variable space~\cite{PPGDN13, NKPMN14, SKH23}. This implicitly assumes that there are only a few dominant eigenvalues representing the slowest timescales, followed by a spectral gap, so that the projected dynamics retain the metastable behavior. Future work includes extending the aforementioned techniques to the Koopman--von Neumann operator and applying the resulting algorithms to higher-dimensional systems. Additionally, it will be necessary to derive convergence results and error bounds akin to~\cite{KM18, NPPSW23, ZZ23, LLLK24}.

Another open question is which steps of the overall process---defining and evaluating basis functions, constructing matrix representations of the Koopman--von Neumann generator and operator, and computing eigenvalues and eigenvectors or evolving wavefunctions---should be carried out on a classical computer and which on a quantum computer. Closely related to this is the question of how to define basis functions and discretization schemes that are suitable for quantum hardware. It remains to be investigated which discretization methods are most appropriate for a direct quantum implementation of the Koopman--von Neumann framework, how the resulting matrix representations can be decomposed efficiently into quantum circuits, and which classes of dynamical systems admit particularly suitable quantum representations. Our future research will also include the analysis of the scaling behavior of such approaches in order to identify problem classes and system sizes for which a quantum advantage over classical methods might be possible.

\section*{Data availability}

The code and examples that support the findings presented in this paper are publicly available at
\url{https://github.com/sklus/d3s} and \url{https://github.com/fnueske/kvn_ode}.

\section*{Acknowledgments}

We thank Paul Bergold for many interesting discussions about Koopman--von Neumann mechanics and Sofie Verhees and Ashwin Nayak for their FEniCS support. S.K.\ was funded by a Leverhulme Trust Research Fellowship.

\bibliographystyle{unsrturl}
\bibliography{KvN}

\appendix

\section{Comparison with the QECD approach}
\label{app:QECD}

The \emph{quantum embedding of classical dynamics} (QECD) approach described in~\cite{GOPSS22, FGS24, DM26} allows for an embedding of classical states and observables into a quantum system that faithfully represents the statistics of the classical system. Here, we show how the approach outlined in Section~\ref{subsec:kvn_mechanics} can be understood as an extension of QECD to systems with a non-unitary Koopman operator.

Recall from Section~\ref{subsec:koopman_pf_ops} that observables are measurable functions $ f \colon \Omega \mapsto \R $ in a suitable function space, for example in $ L^\infty(\Omega) $. Observables evolve under the Koopman semigroup $ f_t = \mathcal{K}^t f = f \circ \Phi^t $. Likewise, states of the system are probability densities $ p \in L^1(\Omega) $, evolved in time by the Perron--Frobenius operator, i.e., $ p_t = \mathcal{P}^t p $. A quantum embedding is a representation of the classical system with the following elements: (1) observables $f$ are identified with quantum observables (self-adjoint operators) $\mathcal{A}_f$ acting on a Hilbert space $H$; (2) states $p$ are identified with density operators $\rho$ acting on $H$; (3) observables and quantum observables evolve in time in such a way that expected measurements reproduce those of the classical system.

QECD achieves these goals for an ergodic measure-preserving system on the state space $ \Omega $ by (i)~choosing $ H = L^2(\Omega) $; (ii) representing observables as multiplication operators on $ H $ via $ \mathcal{A}_f \varphi = f \varphi $; (iii) representing probability densities with density operators $ \rho_p = \innerprod{p^{\nicefrac{1}{2}}}{\cdot} p^{\nicefrac{1}{2}} $, which are pure states with wavefunction $ \psi = p^{\nicefrac{1}{2}} $; (iv) defining the time-evolution of the state $ \rho_0 = \rho $ by the von Neumann equation
\begin{equation*}
	\rho_t = \mathcal{P}^t \rho_0 \ts \mathcal{K}^t = e^{-\mathrm{i}t\mathcal{H}} \rho_0 \ts e^{\mathrm{i}t\mathcal{H}},
\end{equation*}
where the Hamiltonian $ \mathcal{H} = -\mathrm{i} \mathcal{L} $ is the self-adjoint version of the Koopman generator. Since $ \rho_0 $ is a pure state, this reduces to solving the standard Schrödinger equation
\begin{equation*}
	\mathrm{i} \ts \dot{\psi}_t = \mathcal{H} \psi_t,
\end{equation*}
and again identifying $ \rho_t = \innerprod{\psi_t}{\cdot\;}\psi_t $. For ergodic measure-preserving systems, the Koopman generator $ \mathcal{L} $ is skew-adjoint, hence the definition $ \mathcal{H} = -\mathrm{i} \mathcal{L} $ coincides with our definition using the Koopman--von Neumann generator $\mathcal{Q}$. We can read the framework introduced in Section~\ref{subsec:kvn_mechanics} as using the same identification of observables and states, with the only modification that the Koopman--von Neumann generator provides the correct definition of the Hamiltonian as $ \mathcal{H} = \mathrm{i} \mathcal{Q} $ to define the quantum dynamics for a non-measure-preserving system.

In both cases, the expected value of any observable can be recovered by taking the quantum mechanical expectation of a quantum observable under the time-evolved state:
\begin{equation*}
	\begin{split}
		\tr(\rho_t \mathcal{A}_f) &= \innerprod{\psi_t}{\mathcal{A}_f \ts \psi_t} = \int_\Omega \psi_t(x) \ts f(x) \ts \psi_t(x)^* \ts \diff x \\
	       &= \int_\Omega |\psi_t(x)|^2 f(x) \ts \diff x = \mathbb{E}^{p_t}[f]
	        = \mathbb{E}[f(\Phi_t)].
	\end{split}
\end{equation*}
The second-to-last equality, i.e., Born's rule for $ \psi_t $, was shown in Section~\ref{subsec:kvn_mechanics}.

\section{Proof of Lemma \ref{lem:properties of operators}}
\label{app:proof}

\begin{enumerate}[leftmargin=4ex, itemsep=0ex, topsep=0.5ex, label=\roman*)]
\item We have $ \mathcal{L}(f \ts g) = b \vdot \nabla(f \ts g) = (b \vdot \nabla f) \ts g + f \ts (b \vdot \nabla g) = (\mathcal{L}f) \ts g + f \ts (\mathcal{L}\ts g) $.
\item This follows from i) since $ \mathcal{L}(f_{\lambda_1} \ts f_{\lambda_2}) = (\lambda_1 \ts f_{\lambda_1}) \ts f_{\lambda_2} + f_{\lambda_1} \ts (\lambda_2 \ts f_{\lambda_2}) = (\lambda_1 + \lambda_2) \ts f_{\lambda_1} \ts f_{\lambda_2} $.
\item Applying the chain rule yields
\begin{equation*}
    \mathcal{L} (g \circ f_\lambda) = (b \vdot \nabla f_\lambda) \ts g' \circ f_\lambda = \lambda \ts f_\lambda \cdot g' \circ f_\lambda,
\end{equation*}
which also implies the second statement.
\item Using iii) and choosing $ g(x) = x^r $, we have
\begin{equation*}
    \mathcal{L} f_\lambda^r = \lambda f_\lambda \cdot r \ts f_\lambda^{r-1} = r \ts \lambda \ts f_\lambda^r.
\end{equation*}
\item It holds that
\begin{align*}
    \mathcal{L}^*(f_0 \ts \rho_\mu) = -(b \vdot \nabla f_0) \ts \rho_\mu - f_0 \ts (b \vdot \nabla \rho_\mu) - \div(b) \ts f_0 \ts \rho_\mu = f_0 (\mathcal{L}^* \rho_\mu) = \mu \ts (f_0 \ts \rho_\mu).
\end{align*}
\item The product rule implies that
\begin{align*}
    \mathcal{L}^* (f \ts g) &= -b \vdot \nabla (f \ts g) - \div(b) \ts f \ts g \\
        &= -(b \vdot \nabla f) \ts g - f \ts (b \vdot \nabla g) - \div(b) \ts f \ts g \\
        &= \big(\!-b \vdot \nabla f - \tfrac{1}{2} \div(b) \ts f\big) \ts g + f \ts \big(\!-b \vdot \nabla g - \tfrac{1}{2} \div(b) \ts g\big) \\
        &= (\mathcal{Q} \ts f) \ts g + f \ts (\mathcal{Q} \ts g).
\end{align*}
\item Using vi), we get $ \mathcal{L}^*(\psi_{\nu_1} \ts \psi_{\nu_2}) = (\nu_1 \ts \psi_{\nu_1}) \ts \psi_{\nu_2} + \psi_{\nu_1} \ts (\nu_2 \ts \psi_{\nu_2}) = (\nu_1 + \nu_2) \ts \psi_{\nu_1} \ts \psi_{\nu_2} $.
\item This immediately follows from the definitions of the operators since
\begin{equation*}
    \mathcal{L}^* f_\lambda = -b \vdot \nabla f_\lambda - \div(b) \ts f_\lambda = -\lambda \ts f_\lambda - \beta \ts f_\lambda = -(\lambda + \beta) \ts f_\lambda.
\end{equation*}
The result for the Koopman--von Neumann generator follows in the same way. \hfill $\square$
\end{enumerate}

\section{Derivation of the quantum gate decomposition}
\label{app:circuit representation}

We consider a matrix $ A $ of the form
\begin{equation*}
    A =
    \begin{bmatrix}
        0 & -z_1 & -z_2 & -z_3\\
        z_1 & 0 & 0 & 0\\
        z_2 & 0 & 0 & 0\\
        z_3 & 0 & 0 & 0
    \end{bmatrix}
\end{equation*}
with $ z_1, z_2, z_3 > 0 $ and define $ r = \sqrt{z_1^2 + z_2^2 + z_3^2} $. Since $ A^3 = -r^2 A $, the exponential series for $ \exp(t \ts A) $ reduces to a combination of $ I $, $ A $, and $ A^2 $, i.e.,
\begin{equation*}
\begin{split}
    \exp(tA) &= I + \frac{\sin(rt)}{r} A + \frac{1-\cos(rt)}{r^2} A^2\\
    &=
    \begin{bmatrix}
        \cos(rt) & -\frac{\sin(rt)}{r}z_1 & -\frac{\sin(rt)}{r}z_2 & -\frac{\sin(rt)}{r}z_3\\[6pt]
        \frac{\sin(rt)}{r}z_1 & 1-\frac{1-\cos(rt)}{r^2}z_1^2 & -\frac{1-\cos(rt)}{r^2}z_1 z_2 & -\frac{1-\cos(rt)}{r^2}z_1 z_3\\[6pt]
        \frac{\sin(rt)}{r}z_2 & -\frac{1-\cos(rt)}{r^2}z_1 z_2 & 1-\frac{1-\cos(rt)}{r^2}z_2^2 & -\frac{1-\cos(rt)}{r^2}z_2 z_3\\[6pt]
        \frac{\sin(rt)}{r}z_3 & -\frac{1-\cos(rt)}{r^2}z_1 z_3 & -\frac{1-\cos(rt)}{r^2}z_2 z_3 & 1-\frac{1-\cos(rt)}{r^2}z_3^2
    \end{bmatrix}.
\end{split}
\end{equation*}
Defining $ s = \sqrt{z_2^2 + z_3^2} $ and the angles
\begin{equation*}
    \theta = 2 \arctan\left(\frac{s}{z_1}\right), \quad
    \phi = 2 \arctan\left(\frac{z_2}{z_3}\right),\quad
    \beta = 2 \ts r \ts t,
\end{equation*}
we can express $\exp(tA)$ as $V^\top U \ts V$, where
\begin{equation*}
    V = {C\!R}_y^{q_2\to q_1}(-\theta)\,{C\!R}_y^{q_1\to q_2}(\phi),
\end{equation*}
and
\begin{equation*}
    U = (X^{q_1}\otimes I^{q_2})\,{C\!R}_y^{q_1\to q_2}(\beta)\,(X^{q_1}\otimes I^{q_2}).
\end{equation*}
Since all gates are real, $ V $ is orthogonal and hence $ V^{-1} = V^+ = V^\top $. Finally, we show that the circuit indeed implements $ \exp(t \ts A) $. Define $ c_\alpha=\cos(\tfrac{\alpha}{2}) $ and $ s_\alpha = \sin(\tfrac{\alpha}{2}) $ for $ \alpha \in \{\theta, \phi, \beta\} $.
The controlled rotation $ {C\!R}_y^{q_2\to q_1}(-\theta) $ has the matrix representation
\begin{equation*}
    {C\!R}_y^{q_2\to q_1}(-\theta) =
    \begin{bmatrix}
        1 & 0 & 0 & 0 \\
        0 & c_\theta & 0 & s_\theta \\
        0 & 0 & 1 & 0\\
        0 & -s_\theta & 0 & c_\theta
    \end{bmatrix},
\end{equation*}
while
\begin{equation*}
    {C\!R}_y^{q_1\to q_2}(\phi) =
    \begin{bmatrix}
        1 & 0 & 0 & 0 \\
        0 & 1 & 0 & 0 \\
        0 & 0 & c_\phi & -s_\phi \\
        0 & 0 & s_\phi & c_\phi
    \end{bmatrix}.
\end{equation*}
Hence, $ V $ is given by
\begin{equation*}
    V =
    \begin{bmatrix}
        1 & 0 & 0 & 0\\
        0 & c_\theta & s_\theta s_\phi & c_\phi s_\theta \\
        0 & 0 & c_\phi & -s_\phi \\
        0 & -s_\theta & c_\theta s_\phi & c_\theta c_\phi
    \end{bmatrix}
\end{equation*}
and $ U $ by
\begin{equation*}
    U =
    \begin{bmatrix}
        c_\beta & -s_\beta & 0 & 0 \\
        s_\beta & c_\beta & 0 & 0 \\
        0 & 0 & 1 & 0 \\
        0 & 0 & 0 & 1
    \end{bmatrix}.
\end{equation*}
A direct multiplication of the matrices gives
\begin{equation*}
    V^\top U \ts V =
    \begin{bmatrix}
        c_\beta & -c_\theta s_\beta & -s_\theta s_\phi s_\beta & -s_\theta c_\phi s_\beta\\[4pt]
        c_\theta s_\beta & c_\theta^2 c_\beta+s_\theta^2 & c_\theta s_\theta s_\phi (c_\beta-1) & c_\theta s_\theta c_\phi (c_\beta-1)\\[4pt]
        s_\theta s_\phi s_\beta & c_\theta s_\theta s_\phi (c_\beta-1) & c_\phi^2+s_\phi^2(s_\theta^2 c_\beta+c_\theta^2) & s_\phi c_\phi (s_\theta^2 c_\beta+c_\theta^2-1)\\[4pt]
        s_\theta c_\phi s_\beta & c_\theta s_\theta c_\phi (c_\beta-1) & s_\phi c_\phi (s_\theta^2 c_\beta+c_\theta^2-1) & s_\phi^2+c_\phi^2(s_\theta^2 c_\beta+c_\theta^2)
    \end{bmatrix}.
\end{equation*}
Using the above definitions together with elementary trigonometric identities, we have
\begin{equation*}
    c_\theta = \frac{x}{r},\quad s_\theta=\frac{\sqrt{y^2+z^2}}{r}, \quad
    c_\phi = \frac{z}{\sqrt{y^2+z^2}},\quad s_\phi=\frac{y}{\sqrt{y^2+z^2}}.
\end{equation*}
Substituting these identities into the above matrix simplifies all entries to
\begin{equation*}
    V^\top U \ts V =
    \begin{bmatrix}
        c_\beta & -\frac{x}{r}s_\beta & -\frac{y}{r}s_\beta & -\frac{z}{r}s_\beta\\[6pt]
        \frac{x}{r}s_\beta & 1+\frac{x^2}{r^2}(c_\beta-1) & \frac{xy}{r^2}(c_\beta-1) & \frac{xz}{r^2}(c_\beta-1)\\[6pt]
        \frac{y}{r}s_\beta & \frac{xy}{r^2}(c_\beta-1) & 1+\frac{y^2}{r^2}(c_\beta-1) & \frac{yz}{r^2}(c_\beta-1)\\[6pt]
        \frac{z}{r}s_\beta & \frac{xz}{r^2}(c_\beta-1) & \frac{yz}{r^2}(c_\beta-1) & 1+\frac{z^2}{r^2}(c_\beta-1)
    \end{bmatrix},
\end{equation*}
and with $ c_\beta = \cos(r \ts t) $, $ s_\beta = \sin(r \ts  t) $ this is exactly the matrix representation of $ \exp(t \ts A) $ derived above.

\end{document}